\theoremstyle{plain}
  \newtheorem{theorem}{Theorem}[section]
  \newtheorem{lemma}[theorem]{Lemma}
\theoremstyle{definition}
  \newtheorem{definition}[theorem]{Definition}
  \newtheorem{example}[theorem]{Example}
  \newtheorem{remark}[theorem]{Remark}
\newcommand{\VT}{\mathcal{V}(T)}
\newcommand{\ET}{\mathcal{E}(T)}
\newcommand{\TT}{\mathscr{T}}
\newcommand{\des}{\mathsf{des}}
\newcommand{\mdc}{\mathsf{mdc}}
\newcommand{\mdct}{\mathsf{mdc}(\mathscr{T})}
\newcommand{\K}{{\mathcal K}}
\newcommand{\PP}{{\mathscr P}}
\newcommand{\FF}{{\mathscr F}}
\newcommand{\s}{\mathsf{s}}
\newcommand{\mrq}{\mathsf{minrk}_q}
\newcommand{\A}{{\mathcal A}}
\newcommand{\B}{{\mathcal B}}
\newcommand{\C}{{\mathcal C}}
\newcommand{\E}{{\mathcal E}}
\newcommand{\G}{{\mathcal G}}
\newcommand{\HH}{{\mathcal H}}
\newcommand{\I}{{\mathcal I}}
\newcommand{\J}{{\mathcal J}}
\newcommand{\Q}{{\mathcal Q}}
\newcommand{\X}{{\mathcal X}}
\newcommand{\V}{{\mathcal V}}
\newcommand{\N}{{\mathcal N}}
\newcommand{\sS}{{\mathcal S}}
\newcommand{\cL}{\mathscr{L}}
\newcommand{\bM}{{\boldsymbol{M}}}
\newcommand{\rank}{{\mathsf{rank}_q}}
\newcommand{\define}{\stackrel{\mbox{\tiny $\triangle$}}{=}}
\newcommand{\bx}{{\boldsymbol{x}}}
\newcommand{\be}{{\boldsymbol e}}
\newcommand{\aG}{{\alpha(\G)}}
\newcommand{\Gc}{\overline{{\mathcal{G}}}}
\newcommand{\VG}{\mathcal{V}(\mathcal{G})}
\newcommand{\VGc}{\mathcal{V}(\overline{\mathcal{G}})}
\newcommand{\EG}{\mathcal{E}(\mathcal{G})}
\newcommand{\EGc}{\mathcal{E}(\overline{\mathcal{G}})}
\newcommand{\kp}{\kappa}
\newcommand{\Ga}{\Gamma}
\newcommand{\nin}{\noindent}
\newcommand{\ft}{\mathbb{F}_2}
\newcommand{\fq}{\mathbb{F}_q}
\newcommand{\et}{{\emph{et al.}}}
\newcommand{\fkE}{{\mathfrak E}}
\begin{document}

\title{Polynomial Time Algorithm for Min-Ranks of Graphs with Simple Tree Structures{\small $^\dagger$}}



\author{Son Hoang Dau{\small $^\ddagger$} \and Yeow Meng Chee{\small $^\ddagger$}}
\date{}




\maketitle
\let\thefootnote\relax\footnotetext{$^\dagger$This work is supported in part by the National Research Foundation of Singapore (Research Grant NRF-CRP2-2007-03).}
\let\thefootnote\relax\footnotetext{$^\ddagger$Division of Mathematical Sciences, School of Physical and Mathematical Sciences,
               Nanyang Technological University, \it{dausonhoang84@gmail.com, ymchee@ntu.edu.sg}}

\begin{abstract}
The \emph{min-rank} of a graph was introduced by Haemers (1978) to bound the Shannon capacity of a graph. This parameter of a graph has recently gained much more attention from the research community after the work of Bar-Yossef {\et} (2006). In their paper, it was shown that the min-rank of a graph $\G$ characterizes the optimal scalar linear solution of an instance of the Index Coding with Side Information (ICSI) problem described by the graph $\G$. 

It was shown by Peeters (1996) that
computing the min-rank of a general graph is an NP-hard problem. 
There are very few known families of graphs whose min-ranks can be found in 
polynomial time.   
In this work, we introduce a new family of graphs with efficiently computed min-ranks.
Specifically, we establish a polynomial time dynamic programming algorithm 
to compute the min-ranks of graphs having \emph{simple tree structures}. 
Intuitively, such graphs are obtained by gluing together, in a tree-like structure, 
any set of graphs for which the min-ranks can be determined in polynomial time.  
A polynomial time algorithm to recognize such graphs is also proposed. 
\end{abstract}

\section{Introduction}

\subsection{Background}

Building communication schemes which allow participants to communicate efficiently
has always been a challenging yet intriguing problem for information theorists. 
Index Coding with Side Information (ICSI) (\cite{BirkKol98}, \cite{BirkKol2006}) is a 
communication scheme dealing with broadcast channels in which receivers have prior side information 
about the messages to be transmitted. Exploiting the knowledge about the side information,
the sender may significantly reduce the number of required transmissions compared
with the naive approach (see Example~\ref{ex:mr-ex}). As a consequence, the efficiency of the communication over 
this type of broadcast channels could be dramatically improved. 
Apart from being a special case of the well-known (non-multicast) Network Coding problem
(\cite{Ahlswede}, \cite{KoetterMedard2003}), the ICSI
problem has also found various potential applications on its own, such as audio- and 
video-on-demand, daily newspaper delivery, data pushing, and opportunistic wireless networks
(\cite{BirkKol98}, \cite{BirkKol2006}, \cite{Yossef}, \cite{Rouayheb2009}, \cite{Katti2006}, \cite{Katti2008}). 

In the work of Bar-Yossef {\et} \cite{Yossef}, the optimal transmission rate of scalar linear index codes for an ICSI instance
was neatly characterized by the so-called \emph{min-rank} of the side information graph
corresponding to that instance. 
The concept of min-rank of a graph was first introduced by Haemers \cite{Haemers1978}, which serves as an upper bound for the celebrated Shannon capacity of a graph \cite{Shannon1956}. This upper bound, as pointed out by Haemers, 
although is usually not as good as the Lov\'asz bound \cite{Lovasz1979}, is sometimes tighter and easier to compute.
However, as shown by Peeters \cite{Peeters96}, computing the min-rank of a general graph 
(that is, the Min-Rank problem) is a hard task. 
More specifically, Peeters showed that deciding whether the min-rank of a graph 
is smaller than or equal to three is an NP-complete problem. 
The interest in the Min-Rank problem has grown significantly after the work of Bar-Yossef {\et} \cite{Yossef}. 
Subsequently, Lubetzky and Stav \cite{LubetzkyStav} constructed a family of graphs for which the min-rank over the binary field is strictly larger than the min-rank over a nonbinary field. This disproved a conjecture by Bar-Yossef {\et}~\cite{Yossef} which stated that binary min-rank provides an optimal solution for the ICSI problem. 
Exact and heuristic algorithms to find min-rank over the binary field of a graph was developed in the work of Chaudhry and Sprintson \cite{ChaudhrySprintson}. The min-rank of a random graph was investigated by Haviv and Langberg \cite{HavivLangberg2011}. A dynamic programming approach was proposed by Berliner and Langberg~\cite{BerlinerLangberg2011} to compute in polynomial time min-ranks of outerplanar graphs. 
Algorithms to approximate min-ranks of graphs with bounded
min-ranks were studied by Chlamtac and Haviv~\cite{ChlamtacHaviv2011}. 
They also pointed out a tight upper bound for the Lov\'asz $\vartheta$-function \cite{Lovasz1979}
of graphs in terms of their min-ranks.  
It is also worth noting that approximating min-ranks of graphs within any constant ratio 
is known to be NP-hard (see Langberg and Sprintson~\cite{LangbergSprintson2008}).

\subsection{Our Contribution}
So far, families of graphs whose min-ranks are either known or computable 
in polynomial time are the following: odd cycles and their complements, 
perfect graphs, and outerplanar graphs.
Inspired by the work of Berliner and Langberg~\cite{BerlinerLangberg2011}, 
we develop a dynamic programming algorithm 
to compute the min-ranks of graphs having \emph{simple tree structures}. 
Loosely speaking, such a graph can be described as a compound rooted tree, 
the nodes of which are induced subgraphs whose min-ranks can be computed in polynomial time. 
\begin{figure}[H]
\centering
		\includegraphics{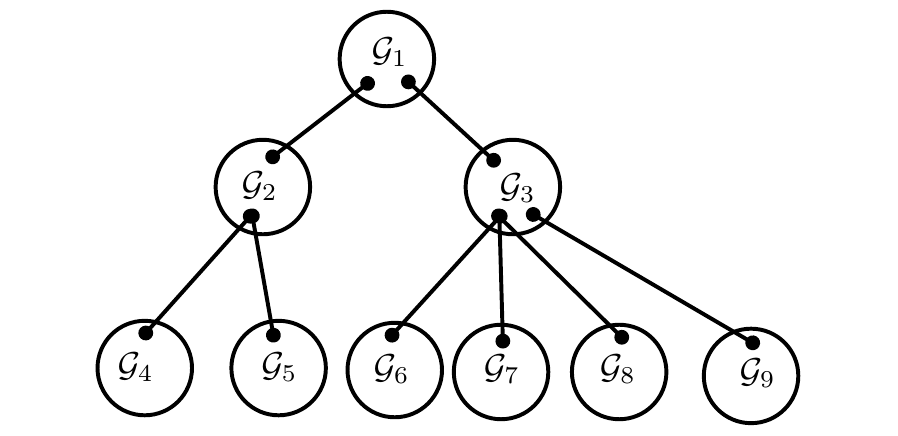}
\caption{A graph $\G$ with a simple tree structure}
\label{fig:graph_simple_tree_structure}
\end{figure}
As an illustrative example, a graph $\G$ with a simple tree structure is depicted
in Figure~\ref{fig:graph_simple_tree_structure}. 
In this example, each induced subgraph (node) $\G_i$ ($i \in [9]$) of $\G$ is either 
a perfect graph or an outerplanar graph (hence $\G_i$'s min-rank can be efficiently computed). 
The dynamic programming algorithm (Algorithm~1) computes 
the min-ranks of the subtrees, from the leaves to the root, 
in a bottom-up manner. The task of computing the min-rank of a graph is accomplished 
when the computation reaches the root of the compound tree.
Let $\FF_\PP(c)$, roughly speaking, denote the family of graphs with simple tree structures 
where each node in the tree structure is connected 
to its child nodes via at most $c$ vertices.
For instance, the graph $\G$ depicted in Figure~\ref{fig:graph_simple_tree_structure} 
belongs to the family $\FF_\PP(2)$.
We prove that Algorithm~1 runs in polynomial time if $\G \in \FF_\PP(c)$, 
and also provide another algorithm (Algorithm~2) that recognizes 
a member of $\FF_\PP(c)$ in polynomial time, for any constant $c > 0$. 

In fact, Algorithm~1 still runs in polynomial time for graphs belonging to a larger family $\FF_\PP(c\log(\cdot))$. This family consists 
of graphs $\G$ with simple tree structures where each node in
the tree structure is connected to its child nodes via at most
$c\log |\VG|$ vertices. However, finding a
polynomial time recognition algorithm for members of
$\FF_\PP(c\log(\cdot))$ is still an open problem.

Another way to look at our result is as follows. 
From a given set of graphs $\G_i$ ($i \in [k]$) whose min-ranks 
can be computed in polynomial time, one can build a new graph
$\G$ such that $\G_i$ ($i \in [k]$) are all the connected components
of $\G$. Then by Lemma~\ref{lem:connected_components}, 
the min-rank of $\G$ can be trivially computed by taking the sum
of all the min-ranks of $\G_i$ ($i \in [k]$). 
This is a \emph{trivial} way to build up a new graph whose min-rank 
can be efficiently computed from a given set of graphs whose 
min-ranks can be efficiently computed. 
Our main contribution is to provide a method to build up in 
a \emph{nontrivial} way an infinite family of 
\emph{new} graphs with min-ranks computable in polynomial time 
from \emph{given} families of graphs with min-ranks computable in polynomial time.
This new family can be further enlarged whenever a new family of graphs 
(closed under induced subgraphs) with min-ranks computable in polynomial time is discovered. Using this method, roughly speaking, from a given set of graphs,
we build up a new one by introducing edges that connect these graphs 
in such a way that a tree structure is formed.  
  
It is also worth mentioning that 
the min-ranks of \emph{all} non-isomorphic graphs of order up to $10$ can be found
using a computer program that combines a SAT-based approach \cite{ChaudhrySprintson} 
and a Branch-and-Bound approach.

\subsection{Organization}
The paper is organized as follows. 
Basic notation and definitions are presented in Section~\ref{sec:not_def}. 
The ICSI problem is formally formulated in Section~\ref{sec:ICSI_formulation}. 
The dynamic programming algorithm that computes in polynomial time min-ranks 
of the graphs with simple tree structures is presented in Section~\ref{sec:type_I}. 
An algorithm that recognizes such graphs in polynomial time is also developed therein. 
We mention the computation of min-ranks of all non-isomorphic graphs of small
orders in Section~\ref{sec:small_graphs}. 
Finally, some interesting open problems are proposed in Section~\ref{sec:open_problems}. 

\section{Notation and Definitions}
\label{sec:not_def}

We use $[n]$ to denote the set of integers $\{1,2,\ldots,n\}$.
We also use $\fq$ to denote the finite field of $q$ elements. 
For an $n \times k$ matrix $\bM$, let $\bM_i$ denote the $i$th row of $\bM$. 
For a set $E \subseteq [n]$, let $\bM_E$ denote the $|E| \times k$ 
sub-matrix of $\bM$ formed by rows of $\bM$ that are indexed by the elements of $E$.
For any matrix $\bM$ over $\fq$, 
we denote by $\rank(\bM)$ the rank of $\bM$ over $\fq$. 

A simple \emph{graph} is a pair $\G = (\VG, \EG)$ where $\VG$ is the set of vertices of $\G$\index{vertex}
and $\EG$ is a set of \emph{unordered} pairs of distinct vertices of $\G$. We refer to $\EG$ as the set of \emph{edges} of $\G$. A typical edge of $\G$ is of the form $\{u,v\}$ where $u\in \VG$, $v \in \VG$, and $u \neq v$.
If $e = \{u,v\} \in \EG$ we say that $u$ and $v$ are adjacent. 
We also refer to $u$ and $v$ as the \emph{endpoints} of $e$.
We denote by $N^\G(u)$ the set of neighbors of $u$, namely, the set of vertices
adjacent to $u$. 

Simple graphs have no loops and no parallel edges. 
In the scope of this paper, only simple graphs are considered. 
Therefore, we use graphs to refer to simple graphs for succinctness.  
The number of vertices $|\VG|$ is called the \emph{order} of $\G$,
whereas the number of edges $|\EG|$ is called the \emph{size} of $\G$.
The \emph{complement} of a graph $\G = (\VG, \EG)$, denoted by $\Gc = (\VGc, \EGc)$, is defined as follows.
The vertex set $\VGc = \VG$. The arc set
\[
\EGc = \big\{\{u,v\}: \ u, v \in \VG, \ u \neq v, \ \{u,v\} \notin \EG \big\}.
\] 

A \emph{subgraph} of a graph $\G$ is a graph whose vertex set $V$ is a subset of that of $\G$ 
and whose edge set is a subset of that of $\G$ restricted on the vertices in $V$.
The subgraph of $\G$ \emph{induced} by $V \subseteq \VG$ is a graph whose vertex set is $V$, 
and edge set is $\{\{u,v\}: \ u \in V,\ v \in V, \ \{u,v\} \in \EG\}$. 
We refer to such a graph as an \emph{induced subgraph} of $\G$.

A \emph{path} in a graph $\G$ is a sequence of pairwise distinct vertices 
$(u_1,u_2,\ldots,u_\ell)$, such that $\{u_i,u_{i+1}\} \in \EG$ for all $i \in [\ell-1]$. 
A \emph{cycle} is a path $(u_1,u_2,\ldots,u_\ell)$ ($\ell \geq 3$) such that $u_1$ and $u_\ell$
are also adjacent.  
A graph is called \emph{acyclic} if it contains no cycles.

A graph is called \emph{connected} if there is a path from each vertex in the graph to every other vertex. 
The \emph{connected components} of a graph are its maximal connected subgraphs.
A \emph{bridge} is an edge whose deletion increases
the number of connected components. 
In particular, an edge in a connected graph is a bridge 
if and only if its removal renders the graph disconnected.  

A collection of subsets $V_1, V_2, \ldots, V_k$ of a set $V$ is said to 
\emph{partition} $V$ if $\cup_{i = 1}^k V_i = V$ and $V_i \cap V_j = \varnothing$
for every $i \neq j$. In that case, $[V_1, V_2, \ldots, V_k]$ is referred
to as a partition of $V$, and $V_i$'s ($i \in [k]$) are called \emph{parts}
of the partition. 

A \emph{tree} is a connected acyclic graph.
A \emph{rooted tree} is a tree with one special vertex designated to be the \emph{root}. 
In a rooted tree, there is a unique path that connects the root to each other vertex. 
The \emph{parent} of a vertex $v$ is the vertex
connected to it on the path from $v$ to the root. Every vertex except the root
has a unique parent. If $v$ is the parent of a vertex $u$ then $u$
is the \emph{child} of $v$.
An \emph{ancestor} of $v$ is a vertex lying on the path connecting $v$ to the root. 
If $w$ is an ancestor of $v$, then $v$ is a \emph{descendant} of $w$. We use $\des_T(w)$ to denote the set of descendants of $w$ in
a rooted tree $T$. 

A graph $\G$ is called \emph{outerplanar} (Chartrand and Harary~\cite{ChartrandHarary1967}) if it can be
drawn in the plane without crossings in such a way that
all of the vertices belong to the unbounded face of the drawing.

An \emph{independent set} in a graph $\G$ is a set of vertices of $\G$ 
with no edges connecting any two of them. 
The cardinality of a largest independent set in $\G$ is referred to as 
the \emph{independence number} of $\G$, denoted by $\aG$. 
The \emph{chromatic number} of a graph $\G$ is the smallest
number of colors $\chi(\G)$ needed to color
the vertices of $\G$ so that no two adjacent vertices share
the same color. 

A graph $\G$ is called \emph{perfect} if for every induced subgraph $\HH$ of $\G$, it holds that $\alpha(\HH) =\chi(\overline{\HH})$. 
Perfect graphs include families of graphs such as trees, bipartite graphs, interval graphs, and chordal graphs.
For the full characterization of perfect graphs, the reader can refer to~\cite{Chudnovsky}.

\section{The Index Coding with Side Information Problem}
\label{sec:ICSI_formulation}

The ICSI problem is formulated as follows. 
Suppose a sender $S$ wants to send a vector $\bx = (x_1,x_2,\ldots,x_n)$, where $x_i \in \fq$ for all $i\in [n]$, to $n$ receiver $R_1,R_2,\ldots, R_n$. Each $R_i$ possesses some prior \emph{side information}, consisting of the messages $x_j$'s, $j \in \X_i \subsetneq [n]$, and is interested in receiving a single message $x_i$. The sender $S$ broadcasts a codeword $\fkE(\bx) \in \fq^\kp$ that enables each receiver $R_i$ to recover $x_i$ based on its side information. Such a mapping $\fkE$ is called an \emph{index code} over $\fq$. We refer to $\kp$ as the \emph{length} of the index code. The objective of $S$ is to find an \emph{optimal} index code, that is, an index code which has minimum length. The index code is called \emph{linear} if $\fkE$ is a linear mapping.

If it is required that $x_j \in \X_i$ if and only if $x_i \in \X_j$ for every $i \neq j$, then the ICSI instance is called \emph{symmetric}. 
Each symmetric instance of the ICSI problem can be described by the so-called side information graph \cite{Yossef}. Given $n$ and $\X_i$, $i \in [n]$, the \emph{side information graph} $\G = (\VG, \EG)$ is defined as follows. The vertex set $\VG = \{u_1,u_2,\ldots,u_n\}$. The edge set $\EG = \cup_{i \in [n]} \big\{\{u_i,u_j\}:\ j \in \X_i \big\}$. 
Sometimes we simply take $\VG = [n]$ and $\EG = \cup_{i \in [n]} \big\{\{i,j\}:\ j \in \X_i \big\}$.

\vskip 10pt 
\begin{definition}[\cite{Haemers1978}]
\label{def:mr-def}
Let $\G = \big(\VG=\{u_1,u_2,\ldots,u_n\},\EG\big)$ be a graph of order $n$.  
\begin{enumerate}
	\item A matrix $\bM=(m_{u_i,u_j}) \in \fq^{n \times n}$ (whose rows
	and columns are labeled by the elements of $\VG$) is said to \emph{fit} $\G$ if 
\[
\begin{cases} m_{u_i,u_j} \neq 0,& i = j, \\ m_{u_i,u_j} = 0,& i \neq j, \ \{u_i,u_j\} \notin \EG. \end{cases}
\]
  \item The \emph{min-rank} of $\G$ over $\fq$ is defined to be
	\[
	\mrq(\G) \define \min\left\{\rank(\bM): \ \bM \in \fq^{n \times n} \text{ and } \bM \text{ fits } \G \right\}.
	\]
\end{enumerate}
\end{definition} 
\vskip 10pt

\begin{theorem}[\cite{Yossef, LubetzkyStav}]
\label{thm:mr_theorem}
The length of an optimal linear index code over $\fq$ for the ICSI instance described by $\G$ is $\mrq(\G)$.  
\end{theorem}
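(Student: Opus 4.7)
The plan is to establish two matching bounds: an existence construction yielding a linear index code of length $\mrq(\G)$, and a converse showing that every linear index code has length at least $\mrq(\G)$.

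For the upper bound, I would take a matrix $\bM \in \fq^{n \times n}$ that fits $\G$ and attains $\rank(\bM) = \mrq(\G) =: r$, and use a rank factorization $\bM = \bA\bB$ with $\bA \in \fq^{n \times r}$ and $\bB \in \fq^{r \times n}$. Defining the encoder by $\fkE(\bx) := \bB\bx \in \fq^r$, receiver $R_i$ can multiply by the $i$-th row $\bA_i$ to obtain $\bA_i\bB\bx = \bM_i \bx = m_{u_i,u_i}x_i + \sum_{j \in \X_i} m_{u_i,u_j} x_j$, where the restriction of the sum to $\X_i$ uses exactly the fact that $\bM$ fits $\G$. Since the diagonal entry is nonzero and the remaining side-information terms are known to $R_i$, it recovers $x_i$, so the optimal length is at most $r = \mrq(\G)$.

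For the converse, consider any valid linear encoder $\bE \in \fq^{\kp \times n}$, $\fkE(\bx) = \bE\bx$, paired with arbitrary (possibly nonlinear) decoders. The decodability condition at $R_i$ says that whenever $\bx, \bx' \in \fq^n$ satisfy $\bE\bx = \bE\bx'$ and $x_j = x'_j$ for all $j \in \X_i$, one has $x_i = x'_i$. Writing $V_i := \{\bz \in \fq^n : z_j = 0 \text{ for all } j \in \X_i\}$ and applying the substitution $\bz := \bx-\bx'$, this is equivalent to $\ker(\bE) \cap V_i \subseteq \ker(\be_i^\top)$, i.e.\ $\be_i^\top \in (\ker(\bE) \cap V_i)^\perp$. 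The orthogonal-complement identity $(U \cap W)^\perp = U^\perp + W^\perp$ then yields a decomposition $\be_i^\top = \bc_i^\top \bE + \sum_{j \in \X_i} d_{i,j} \be_j^\top$ for some $\bc_i \in \fq^{\kp}$ and scalars $d_{i,j} \in \fq$.

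Forming the matrix $\bM$ whose $i$-th row is $\bc_i^\top \bE = \be_i^\top - \sum_{j \in \X_i} d_{i,j} \be_j^\top$, a direct inspection gives $m_{u_i,u_i} = 1$, $m_{u_i,u_j} = -d_{i,j}$ for $j \in \X_i$, and $m_{u_i,u_j} = 0$ for all other off-diagonal entries, so $\bM$ fits $\G$. Since $\bM = \bC\bE$ where $\bC$ stacks the rows $\bc_i^\top$, we get $\rank(\bM) \le \rank(\bE) \le \kp$, hence $\mrq(\G) \le \kp$. The main obstacle is precisely the converse step: extracting a linear decoding structure from an a priori arbitrary decoder, which is resolved by the orthogonal-complement identity above; once that identity is in hand, the verification that the assembled $\bM$ fits $\G$ and has rank at most the code length is routine.
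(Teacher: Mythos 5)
The paper does not prove this theorem; it cites it from Bar-Yossef et al.\ and Lubetzky--Stav, and the only related material in the paper is the informal remark after Example~3.3, which illustrates part of the achievability direction. So there is no ``paper's own proof'' to compare against. Judged on its own, your argument is correct and is essentially the standard proof found in the cited references. The achievability direction via rank factorization $\bM=\bA\bB$, encoding with $\bB$, and decoding row $i$ by subtracting the side-information terms and dividing by the nonzero diagonal entry is exactly right (and implicitly uses the symmetry of the instance, which is built into the paper's setup). For the converse, the move from an arbitrary, possibly nonlinear decoder to a linear decomposition via the decodability condition $\ker(\bE)\cap V_i\subseteq\ker(\be_i^{\top})$ and the identity $(U\cap W)^{\perp}=U^{\perp}+W^{\perp}$ is valid over $\fq$ (the standard bilinear form on $\fq^n$ is nondegenerate, so $\dim U^{\perp}=n-\dim U$ for every subspace, even though the restriction of the form to a subspace may be degenerate), and the assembled matrix $\bM=\bC\bE$ visibly fits $\G$ and has $\rank(\bM)\le\kp$. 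One small presentational point: you should state explicitly that $(\ker\bE)^{\perp}$ is the row space of $\bE$ and that $V_i^{\perp}=\mathrm{span}\{\be_j:j\in\X_i\}$, since those two identifications are what convert the orthogonal-complement decomposition into the concrete form $\be_i^{\top}=\bc_i^{\top}\bE+\sum_{j\in\X_i}d_{i,j}\be_j^{\top}$.
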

\vskip 10pt

\begin{example}
\label{ex:mr-ex}
Consider an ICSI instance with $n = 5$ and $\X_1 = \{2,3,5\}$, $\X_2 = \{1,3\}$, $\X_3 = \{1,2,4\}$, $\X_4 = \{3,5\}$, and 
$\X_5 = \{1,4\}$ (Figure~\ref{fig:ic-ex}). 
\begin{figure}[H]
\centering
\subfloat[An ICSI instance]{
\includegraphics{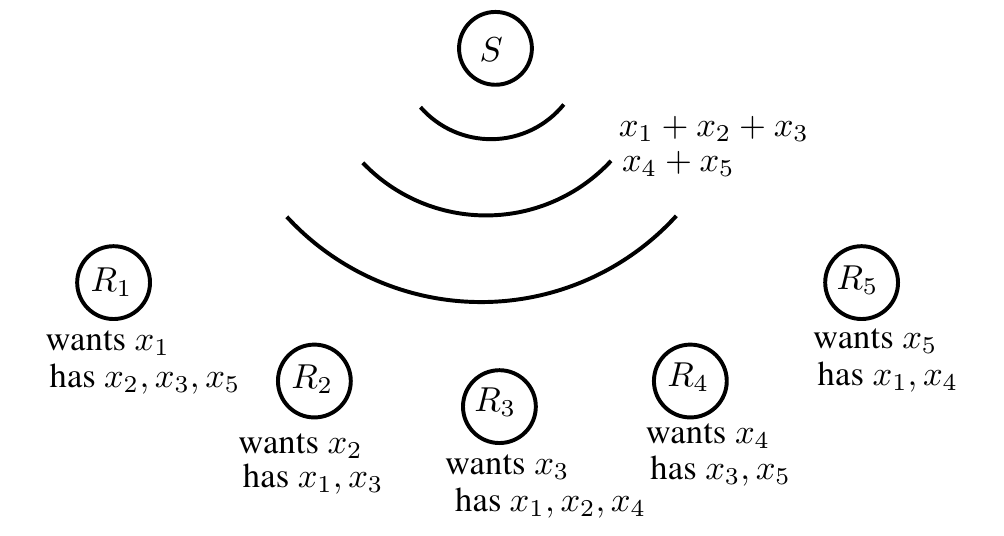}
\label{fig:ic-ex}
}
\qquad
\subfloat[Graph $\G$]{
\includegraphics{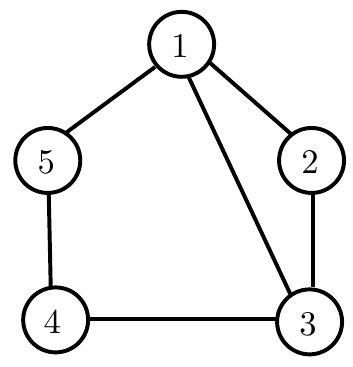}
\label{fig:sub-fig1}
}
\caption{An ICSI instance and the side information graph}
\end{figure}

The side information graph $\G$ that describes this instance is depicted in Figure~\ref{fig:sub-fig1}. 
A matrix fitting $\G$ of rank two over $\ft$, which is the minimum rank, is shown in Figure~\ref{fig:sub-fig3}. 
By Theorem~\ref{thm:mr_theorem}, an optimal linear index code over $\ft$ for this instance 
has length two. In other words, using linear index codes over $\ft$, the smallest number of transmissions
required is two. 
The sender can broadcast two packets $x_1 + x_2 + x_3$ and $x_4+x_5$. 
The decoding process goes as follows. 
Since $R_1$ already knows $x_2$ and $x_3$, it obtains $x_1$ by adding
$x_2$ and $x_3$ to the first packet: 
$x_1 = x_2 + x_3 + (x_1+ x_2+x_3)$.
Similarly, $R_2$ obtains $x_2 = x_1 + x_3 + (x_1+ x_2+ x_3)$; $R_3$ obtains 
$x_3 = x_1 + x_2 + (x_1 + x_2 + x_3)$; $R_4$ obtains 
$x_4 = x_5 + (x_4 + x_5)$; $R_5$ obtains $x_5 = x_4 + (x_4 + x_5)$.
This index code \emph{saves} three transmissions, compared with the trivial solution
when the sender simply broadcasts five messages $x_1$, $x_2$, $x_3$, $x_4$, and $x_5$. 

\vskip 10pt 
\begin{figure}[h]
\centering
\subfloat[A matrix of rank three that fits $\G$]{
$
\bM^{(1)} = 
\begin{pmatrix}
1 & 1 & 0 & 0 & 0 \\
1 & 1 & 0 & 0 & 0 \\
0 & 0 & 1 & 1 & 0 \\
0 & 0 & 1 & 1 & 0 \\
1 & 0 & 0 & 0 & 1
\end{pmatrix}
$
\label{fig:sub-fig2}
}
\qquad \qquad
\subfloat[A matrix of rank two (minimum rank) that fits $\G$]{
$
\bM^{(2)} = 
\begin{pmatrix}
1 & 1 & 1 & 0 & 0 \\
1 & 1 & 1 & 0 & 0 \\
1 & 1 & 1 & 0 & 0 \\
0 & 0 & 0 & 1 & 1 \\
0 & 0 & 0 & 1 & 1
\end{pmatrix}
$
\label{fig:sub-fig3}
}
\caption{Examples of matrices fitting $\G$}
\label{fig:example_mr}
\end{figure}
We may observe that the index code above encodes $\bx$
by taking the dot products of $\bx$ and the first and the forth
rows of the matrix $\bM^{(2)}$ (Figure~\ref{fig:sub-fig3}).  
These two rows, in fact, form a basis of the row space of
this matrix. Therefore, this index code has length equal to 
the rank of $\bM^{(2)}$, which is two. This argument partly explains why the shortest length of a linear index code over $\fq$ for the ICSI instance described by $\G$ is equal to the minimum rank
of a matrix fitting $\G$ (Theorem~\ref{thm:mr_theorem}).   
\end{example}

\begin{lemma}[Folklore]
\label{lem:connected_components}
Let $\G = (\VG, \EG)$ be a graph. 
Suppose that $\G_1, \G_2, \ldots, \G_k$ are subgraphs 
of $\G$ that satisfy the following conditions:
\begin{enumerate}
\item
The sets $\V(\G_i)$'s, $i \in [k]$, partition $\V(\G)$;
\item 
There is no edge of the form $\{u,v\}$ where $u \in \V(\G_i)$ and
$v \in \V(\G_j)$ for $i \neq j$. 
\end{enumerate}
Then
\[
\mrq(\G) = \sum_{i = 1}^k \mrq(\G_i).
\]
In particular, the above equality holds if $\G_1, \G_2, \ldots, \G_k$ are all connected components of $\G$.
\end{lemma}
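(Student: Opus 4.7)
The plan is to establish both inequalities $\mrq(\G) \le \sum_{i=1}^k \mrq(\G_i)$ and $\mrq(\G) \ge \sum_{i=1}^k \mrq(\G_i)$ separately. The key structural observation is that the partition hypothesis forces any matrix fitting $\G$ to be block-diagonal (after permuting rows and columns according to the partition) with blocks indexed by $V_i \define \V(\G_i)$: if $u \in V_i$ and $v \in V_j$ with $i \ne j$, then $u \ne v$ and, by hypothesis~(2), $\{u,v\} \notin \EG$, so any matrix $\bM$ that fits $\G$ must have $m_{u,v} = 0$.

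For the upper bound, I would choose, for each $i \in [k]$, a matrix $\bM^{(i)} \in \fq^{|V_i| \times |V_i|}$ that fits $\G_i$ and achieves $\rank(\bM^{(i)}) = \mrq(\G_i)$. Assembling these blocks along the diagonal (in any ordering of $\VG$ consistent with the partition) and filling the remaining entries with zeros produces a matrix $\bM \in \fq^{n \times n}$. To verify that $\bM$ fits $\G$, note that diagonal entries are nonzero by the fitting property of each $\bM^{(i)}$; off-diagonal entries within a block correspond to non-edges of $\G_i$, which are non-edges of $\G$; and cross-block entries are zero, consistent with the fact that such pairs are non-edges of $\G$. Since the rank of a block-diagonal matrix is the sum of the ranks of its blocks, $\mrq(\G) \le \rank(\bM) = \sum_{i=1}^k \mrq(\G_i)$.

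For the matching lower bound, I would start with any $\bM \in \fq^{n \times n}$ that fits $\G$ and achieves $\rank(\bM) = \mrq(\G)$. By the structural observation, permuting rows and columns so that the parts appear contiguously (a rank-preserving operation) makes $\bM$ block-diagonal with blocks $\bM^{(i)}$ indexed by $V_i$. Since $\G_i$ is an induced subgraph of $\G$, each $\bM^{(i)}$ inherits the fitting property for $\G_i$, hence $\rank(\bM^{(i)}) \ge \mrq(\G_i)$. Therefore $\mrq(\G) = \rank(\bM) = \sum_{i=1}^k \rank(\bM^{(i)}) \ge \sum_{i=1}^k \mrq(\G_i)$, which combined with the previous inequality yields equality.

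The final clause is immediate: the vertex sets of the connected components of $\G$ partition $\VG$, and by definition there are no edges between distinct components, so the hypotheses~(1)--(2) are automatically satisfied. There is no serious obstacle in this argument; the only point that deserves care is verifying in the upper-bound construction that the assembled block-diagonal matrix actually fits $\G$, not merely the individual $\G_i$'s, which follows directly from the absence of cross-part edges.
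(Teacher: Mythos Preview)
Your proposal is correct and follows essentially the same approach as the paper: both arguments rest on the observation that (after relabeling) a matrix fits $\G$ if and only if it is block-diagonal with the $i$th block fitting $\G_i$, together with the additivity of rank for block-diagonal matrices. You have simply spelled out the two inequalities more carefully than the paper's one-sentence sketch, and you correctly flag that the lower bound uses the fact that each $\G_i$ is the \emph{induced} subgraph on $V_i$ (which is implicit in the hypotheses and in the paper's proof).
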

\begin{proof}
The proof follows directly from the fact that a matrix fits $\G$ 
if and only if it is a block diagonal matrix (relabeling the vertices if necessary) 
and the block sub-matrices fit the corresponding subgraphs $\G_i$'s, $i \in [k]$.  
Note also that the rank of a block diagonal matrix is equal to the sum of the ranks 
of its block sub-matrices.  
\end{proof} 

This lemma suggests that it is often sufficient to study the min-ranks of 
graphs that are \emph{connected}.

\section{On Min-Ranks of Graphs with Simple Tree Structures}
\label{sec:type_I}

We present in this section a new family of graphs whose min-ranks
can be found in polynomial time. 

\subsection{Simple Tree Structures}
\label{subsec:tree_structure}

We denote by $\PP$ an arbitrary collection of finitely many families of graphs that satisfy the following properties:
\begin{enumerate}
  \item[(P1)] Each family is closed under the operation of taking induced subgraphs, that is, every 
	induced subgraph of a member of a family in $\PP$ also belongs to that family;
	\item[(P2)] There is a polynomial time algorithm to recognize a member of each family;
	\item[(P3)] There is a polynomial time algorithm to find the min-rank of every member of each family. 
\end{enumerate}

For instance, we may choose such a $\PP$ to be the collection of the following three families: perfect graphs \cite{Yossef}, \cite{CornuejolsLiuVuskovic2003}, outerplanar graphs \cite{BerlinerLangberg2011}, \cite{Wiegers1987}, and graphs of orders bounded by a constant. Instead of saying that a graph $\G$ belongs to a family in $\PP$, with a slight abuse of notation, we 
often simply say that $\G \in \PP$. Note that if $\G \in \PP$ then 
the min-rank of any of its induced subgraph can also be found in polynomial time.  
 
Let $U$ and $V$ be two disjoint nonempty sets of vertices of $\G$. \index{$\s_\G(U,V)$}
Let 
\[
\s_\G(U,V) = \big|\big\{\{u,v\}:\ u \in U,\ v \in V, \ \{u,v\} \in \EG \big\}\big|,
\]
denotes the number of edges each of which has one endpoint in $U$ and the other endpoint in~$V$.  

\vskip 10pt 
\begin{definition}    
Let $\PP$ be a collection of finitely many families of graphs that satisfy (P1), (P2), and (P3).   
A connected graph $\G = (\VG, \EG)$ is said to have a ($\PP$) \emph{simple tree structure} 
if there exists a partition $\Ga = [\V_1,\V_2,\ldots,\V_k]$ of the 
vertex set $\VG$ that satisfies the following three requirements:

\begin{enumerate}
	\item[(R1)] The $\V_i$-induced subgraph $\G_i$ of $\G$ belongs to a family in $\PP$, for every $i \in [k]$;
	\item[(R2)] $\s_\G(\V_i,\V_j) \in \{0,1\}$ for every $i \neq j$; 
	\item[(R3)] The graph $T = (\VT, \ET)$, where $\VT = [k]$ and 
	\[
	\ET = \big\{ \{i,j\}:\ \s_\G(\V_i,\V_j) = 1 \big\},
	\]
	is a rooted tree; The tree $T$ can also be thought of as a graph 
	obtained from $\G$ by contracting each $\V_i$ to a single vertex. 
\end{enumerate}
The 2-tuple $\TT= (\Ga, T)$ is called a ($\PP$) \emph{simple tree structure} of $\G$.
\end{definition}

\begin{example}
\label{ex:tree-structure}
Suppose the $\V_i$-induced subgraph $\G_i$ of $\G$ is either 
a perfect graph or an outerplanar graph for every $i \in [9]$ . 
Let $\PP$ consist of the families of perfect graphs and outerplanar graphs. 
Then $\TT = ([\V_1, \V_2, \ldots,\V_9], T)$ is a ($\PP$) simple tree structure of $\G$
where $T$ is depicted in Figure~\ref{fig:tree_structure}. 
\begin{figure}[H]
\centering
\includegraphics{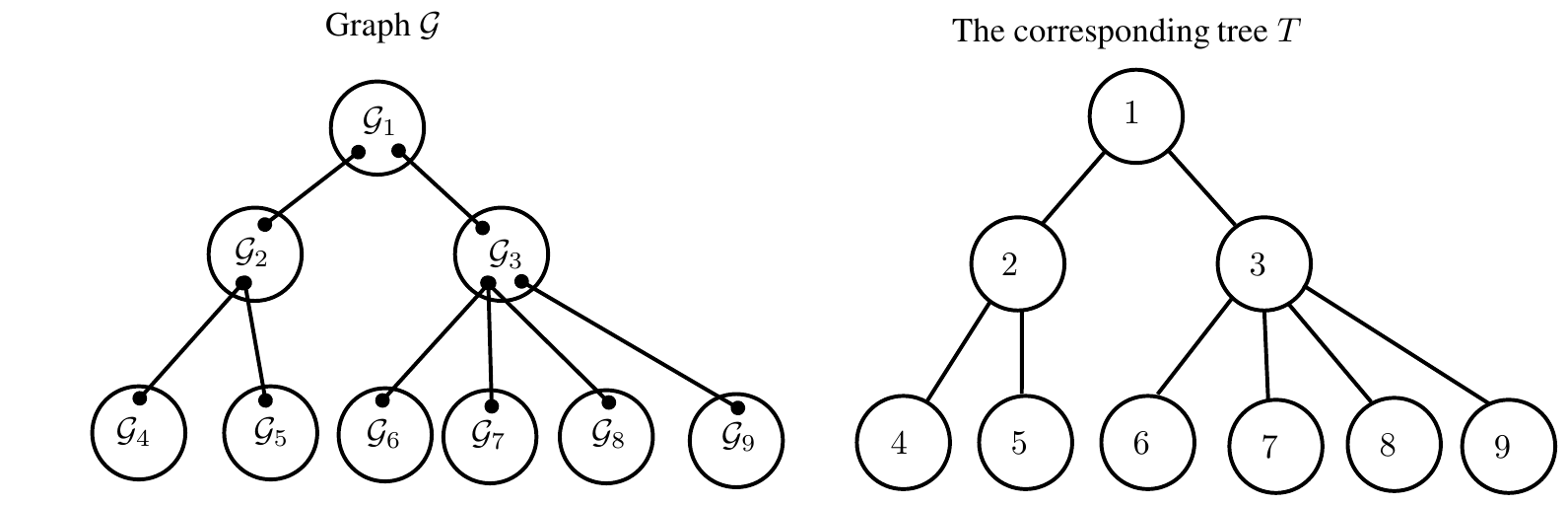}
\caption{A ($\PP$) simple tree structure of a graph $\G$}
\label{fig:tree_structure}
\end{figure}
\end{example}

If a ($\PP$) simple tree structure $\TT = (\Ga, T)$ of $\G$ is given, where $\Ga = [\V_1, \V_2, \ldots, \V_k]$, 
then we can define the following terms:

\begin{enumerate}
	\item Each $\V_i$-induced subgraph $\G_i$ of $\G$ is called a node of $\TT$;\index{node}
	\item If $i$ is the parent of $j$ in $T$, then $\G_i$ is called the parent (node) of $\G_j$
	in $\TT$; We also refer to $\G_j$ as a child (node) of $\G_i$; A node in $\TT$ 
	with no children is called a leaf; The node with no parent is called the root of $\TT$; 
	\item If $j$ is a descendant of $i$ in $T$, then $\G_j$ is called a descendant (node)
	of $\G_i$ and $\G_i$ is called an ancestor (node) of $\G_j$ in $\TT$; 
	\item For each $i \in [k]$ let $\sS_i$ be the subgraph of $\G$ induced by  
	$\V_i \cup (\cup_{j \in \des_T(i)} \V_j)$, where $\des_T(i)$ 
	denotes the set of descendants of $i$ in $T$; 
	In other words, $\sS_i$ is obtained by merging $\G_i$ and all of its
	descendants in $\TT$;
	\item If $\G_j$ is a child of $\G_i$, and $\{u,v\} \in \EG$, where $u \in \V_i$ and $v \in \V_j$,
	then $u$ is called a \emph{downward connector} (DC) of $\G_i$ and $v$ is called the 
	\emph{upward connector} (UC) of $\G_j$; Each node may have several DCs but at most one UC;
	We refer to the DCs and UC of a node as \emph{connectors} of that node. 
	\item Let $\mdct$ denote the maximum number of DCs of a node of $\TT$. 
\end{enumerate}
For instance, for the ($\PP$) simple tree structure depicted in Figure~\ref{fig:tree_structure}, 
suppose that $\G_1$ is the root node, then the node $\G_3$ has two DCs and 
four children. 

Let $\PP$ be a collection of finitely many families of graphs that satisfy (P1), (P2), and (P3).
For any $c > 0$ we define the following family of connected graphs 
\[
\FF_\PP(c) \define \Big\{ \G:\ \G \text{ is connected and has a ($\PP$) simple tree structure } 
\TT \text{ with } \mdct \leq c \Big\}.
\]
A ($\PP$) simple tree structure of a graph $\G$ that proves the membership of $\G$ in 
$\FF_\PP(c)$ is called a \emph{relevant tree structure} of $\G$.
The graph $\G$ in Example~\ref{ex:tree-structure} belongs to $\FF_\PP(2)$. 

\begin{remark}
Suppose that $\PP$ consists of the perfect graphs and the 
outerplanar graphs. Take $\G \in \FF_\PP(c)$ ($c \geq 1$) with a relevant tree structure $\TT$
satisfying the following. There exist a node $\G_i$ of $\TT$ that is perfect
but not outerplanar, and another node $\G_j$ that is outerplanar but not perfect. 
Consequently, $\G$ is neither perfect nor outerplanar. Hence $\G \notin \PP$.
The same argument shows that in general, if $\PP$ contains at least two 
(irredundant) families of graphs then $\FF_\PP(c)$ \emph{properly} contains the families 
of (connected) graphs in $\PP$. Here, a family of graph in $\PP$ is irredundant if
it is not contained in the union of the other families. 
Hence, $\FF_\PP(c)$ ($c \geq 1$) always contains new graphs other than those in $\PP$. 
\end{remark}

\begin{remark}
In general, we can consider \emph{$k$-multiplicity tree structure} of a graph
for every integer $k \geq 0$. 
In such a tree structure, a (parent) node is connected to each of its
child by at most $k$ edges that share the same endpoint in the parent node. 
The $0$-multiplicity tree structures are trivial (see Lemma~\ref{lem:connected_components}). 
The $1$-multiplicity tree structures are simple tree structures. 
In the scope of this paper, we only focus on graphs with simple
tree structures. 
\end{remark}

\subsection{A Polynomial Time Algorithm for Min-Ranks of Graphs in $\FF_\PP(c)$}
\label{subsec:algorithm_for_min-rank}
In this section we show that the min-rank of a member 
of $\FF_\PP(c)$ can be found in polynomial time. 

\vskip 10pt 
\begin{theorem} 
\label{thm:dynamic1}
Let $\PP$ be a collection of finitely many families of graphs that satisfy (P1), (P2), and (P3)
(see Section~\ref{subsec:tree_structure}).
Let $c > 0 $ be a constant and $\G \in \FF_\PP(c)$. Suppose further that a ($\PP$) simple tree structure $\TT = (\Ga, T)$ of $\G$ with $\mdct \leq c$ is known. Then there is an algorithm that computes the min-rank of $\G$ in polynomial time.  
\end{theorem}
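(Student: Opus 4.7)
The plan is a bottom-up dynamic programming algorithm on the tree $T$. I use the standard subspace characterization of min-rank: $\mrq(\G)$ equals the minimum dimension of a subspace $V \subseteq \fq^{\VG}$ such that, for every $u \in \VG$, $V$ contains a \emph{fitting vector} $\br_u$, meaning $(\br_u)_u \neq 0$ and $(\br_u)_w = 0$ whenever $w \neq u$ and $\{u,w\} \notin \EG$. Properties (R2) and (R3) of $\TT$ force the support of $\br_u$ for $u \in \V_i$ to lie in $\V_i$ plus at most one external \emph{bridge coordinate}: the parent-side DC $u_i$ if $u$ equals the UC $v_i$ of $\G_i$, or the UC of a child subtree if $u$ is a DC of $\G_i$. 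Since $\G_i$ has at most $c+1$ connectors (one UC and at most $c$ DCs), the subspace $V_i := \mathrm{span}\{\br_u : u \in \V_i\}$ couples to the rest of $V$ only through those $\le c+1$ external coordinates.

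I traverse $T$ in post-order. For each subtree $\sS_i$, I maintain a table $\Phi_i$ indexed by a bounded set of \emph{interface states}, where a state $s$ records the subspace (of dimension at most $2$) obtained by intersecting the row span of the $\V(\sS_i)$-indexed rows with the bridge-coordinate subspace $\mathrm{span}(e_{u_i}, e_{v_i}) \subseteq \fq^{\VG}$; there are $O(q)$ such states. The entry $\Phi_i[s]$ records the minimum of $\sum_{j \in \des_T(i) \cup \{i\}} \dim V_j$ consistent with interface $s$. Combination at $\G_i$ is additive and separates across children: each child $j$ contributes $\Phi_j[s_j]$ for some interface $s_j$ compatible with $\G_i$'s choice at the bridge $(u_j, v_j)$, and since different child subtrees share no coordinates outside their individual bridges, the optimal per-child choices can be made independently.

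To compute $\dim V_i$ for a given bridge-usage pattern at the $\le c+1$ connectors of $\G_i$, I reduce to $O(1)$ standard min-rank queries on induced subgraphs of $\G_i$. For example, forcing $\br_w = e_w$ at a connector $w$ contributes $1 + \mrq(\G_i - w)$, and other bridge constraints are handled analogously after a case analysis on whether each bridge entry is zero or nonzero. By (R1) every $\G_i$ lies in a family of $\PP$, and by (P1) so does every induced subgraph, so each sub-query is polynomial-time solvable by (P3). With $O(q^{c+1})$ bridge-usage patterns per node and $O(|\VG|)$ nodes in $T$, the total running time is polynomial, and $\mrq(\G) = \min_s \Phi_{\text{root}}[s]$ is read off at the root.

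The main obstacle is designing the interface states carefully so that (i) they remain polynomially bounded in number, (ii) the combination rule at each node is both additive and correct—in particular, correctly accounting for the bridge rank-saving, which is at most $2$ since the intersection of the two sides' row spans at a bridge is confined to the two-dimensional coordinate subspace $\mathrm{span}(e_{u_i}, e_{v_i})$—and (iii) the parent's local computation genuinely reduces to $O(1)$ min-rank queries on induced subgraphs of $\G_i$, rather than requiring auxiliary vertex additions that might leave the families in $\PP$.
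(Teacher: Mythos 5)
Your outline shares the paper's high-level shape --- post-order dynamic programming on $T$, with local min-rank queries on induced subgraphs of the nodes $\G_i$ made polynomial by (P1)--(P3) --- but the core of the argument, the combination step, is not actually established, and as written the objective being tabulated is wrong. You yourself flag the ``main obstacle'' at the end; that obstacle is precisely what the paper's Lemmas~\ref{lem:Property3} and~\ref{lem:claim1} exist to resolve.

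Concretely: you define $V_j = \mathrm{span}\{\br_u : u \in \V_j\}$ and say that $\Phi_i[s]$ stores ``the minimum of $\sum_{j} \dim V_j$'' over the subtree. But the min-rank is $\dim\bigl(\sum_j V_j\bigr)$, not $\sum_j \dim V_j$; these differ exactly by the rank savings at the bridges, which is the entire point of the exercise (e.g.\ in Figure~\ref{fig:example_mr} the three rows in a node collapse to one). The quantity you should be tracking for the subtree rooted at $i$ is $\dim W_i$ where $W_i = \sum_{j \in \des_T(i)\cup\{i\}} V_j$, and then the combination is \emph{not} simply additive: you would need $\dim W_i = \dim V_i + \sum_j \dim W_j - (\text{bridge corrections})$, and when several children hang off the \emph{same} DC $u$ of $\G_i$, the vector $\br_u$ couples to all of their UC coordinates simultaneously, so the corrections do not ``separate across children'' as you claim. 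This coupling is what the paper's Lemma~\ref{lem:claim1} handles (the rank of $\K$ depends on whether \emph{at least one} child has $\mrq(\sS_{j_h}-v_{j_h}) = \mrq(\sS_{j_h})-1$, an OR over children rather than a per-child sum), and what the recursive peeling of DCs in Lemma~\ref{lem:recursive} handles for multiple DCs. You also say the parent's local computation ``reduces to $O(1)$ min-rank queries on induced subgraphs'' but give only one illustrative case ($\br_w = e_w$); pinning a bridge coordinate to an arbitrary nonzero value is not an induced-subgraph query and needs justification.

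The paper sidesteps all of this by working directly with min-rank values rather than explicit subspaces. By Lemma~\ref{lem:Property1}, removing a vertex changes min-rank by $0$ or $1$, so the only state needed per subtree is the pair $\bigl(\mrq(\sS_i), \mrq(\sS_i - v_i)\bigr)$ --- effectively one integer plus one bit, with no dependence on $q$. Lemma~\ref{lem:Property3} (from Berliner--Langberg) then gives the \emph{exact} merge formula for two graphs sharing one vertex, and Lemmas~\ref{lem:claim1} and~\ref{lem:recursive} bootstrap this to one DC with many children and then to up to $c$ DCs, using at most $2^c$ auxiliary removal patterns. Your $O(q)$-line interface states and $O(q^{c+1})$ bridge-usage patterns are both more complicated and still leave the combination rule unproven; to complete your route you would essentially need to rediscover Lemma~\ref{lem:Property3}, at which point the subspace machinery becomes unnecessary.
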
 

To prove Theorem~\ref{thm:dynamic1}, we describe below an algorithm that computes the min-rank of $\G$ when $\G \in \FF_\PP(c)$ and investigate its complexity. 

First, we introduce some notation which is used throughout this section. If $v$ is any vertex of a graph $\G$, then $\G-v$ denotes the graph obtained from $\G$ by removing $v$ and all edges incident to $v$. In general, if $V$ is any set of vertices, then $\G-V$ denotes the graph obtained from $\G$ by removing all vertices in $V$ and all edges incident to any vertex in $V$. 
In other words, $\G-V$ is the subgraph of $\G$ induced by $\VG \setminus V$. 
Note that if $\G \in \PP$ then the min-rank of $\G-V$ can be computed in 
polynomial time for every subset $V \subseteq \VG$. 
The union of two or more graphs is a graph whose vertex set and edge set are the unions of the vertex sets and of the edge sets of the original graphs, respectively.

The following results from \cite{BerlinerLangberg2011} 
are particularly useful in our discussion.
Their proofs can be found in \cite{BerlinerLangberg-full}, which is the full version of \cite{BerlinerLangberg2011}. 

\vskip 10pt 
\begin{lemma}[\cite{BerlinerLangberg2011}]
\label{lem:Property1} 
Let $v$ be a vertex of a graph $\G$. Then 
\[
\mrq(\G)-1 \leq \mrq(\G-v) \leq \mrq(\G). 
\]
\end{lemma}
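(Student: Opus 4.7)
The plan is to prove the two inequalities separately by constructing explicit matrices witnessing the required ranks.

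For the upper bound $\mrq(\G-v) \leq \mrq(\G)$, I would start with a rank-minimizing matrix $\bM \in \fq^{n\times n}$ that fits $\G$. Let $\bM'$ be the $(n-1)\times (n-1)$ matrix obtained by deleting the row and column indexed by $v$. Checking the two conditions of Definition~\ref{def:mr-def}: the diagonal entries of $\bM'$ coincide with a subset of the diagonal entries of $\bM$ and remain nonzero, and an off-diagonal entry $(u_i,u_j)$ of $\bM'$ is zero whenever $\{u_i,u_j\} \notin \E(\G-v)$, because $\E(\G-v) \subseteq \EG$ and the corresponding entry of $\bM$ was already zero. Hence $\bM'$ fits $\G-v$, and since removing a row and a column cannot increase the rank, $\mrq(\G-v) \leq \rank(\bM') \leq \rank(\bM) = \mrq(\G)$.

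For the lower bound, which is equivalent to $\mrq(\G) \leq \mrq(\G-v) + 1$, I would reverse the construction. Take a matrix $\bM' \in \fq^{(n-1)\times (n-1)}$ that fits $\G-v$ with $\rank(\bM') = \mrq(\G-v)$, and extend it to an $n\times n$ matrix $\bM$ indexed by $\VG$ by inserting a new row and column for $v$ whose only nonzero entry is a $1$ in the diagonal position $(v,v)$. This $\bM$ clearly fits $\G$: the diagonal entry at $v$ is nonzero, all off-diagonal entries in the new row and column are zero (so the constraint that non-edges yield zeros is vacuously satisfied for pairs involving $v$), and the remaining entries coincide with $\bM'$, which already satisfied the constraints for $\G-v$. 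After a suitable relabeling, $\bM$ is block diagonal with blocks $[1]$ and $\bM'$, so $\rank(\bM) = \rank(\bM') + 1 = \mrq(\G-v) + 1$, giving $\mrq(\G) \leq \mrq(\G-v) + 1$.

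Neither direction involves a real obstacle; the only subtle point is verifying that the newly added row and column cause no violation of the fitting constraints, which is handled by filling them with zeros off-diagonal. Combining the two inequalities yields the desired chain $\mrq(\G) - 1 \leq \mrq(\G-v) \leq \mrq(\G)$.
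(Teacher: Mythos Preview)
The paper does not actually prove Lemma~\ref{lem:Property1}; it merely cites the result from \cite{BerlinerLangberg2011} and refers the reader to \cite{BerlinerLangberg-full} for a proof. Your argument is correct and is the standard one: restricting a fitting matrix to the rows and columns indexed by $\VG\setminus\{v\}$ gives the upper bound, and extending a fitting matrix for $\G-v$ by a unit row and column at $v$ gives the lower bound. There is nothing to compare against in this paper, but your proof would be perfectly acceptable as a self-contained justification.
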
 
\vskip 10pt  

\begin{lemma}[\cite{BerlinerLangberg2011}]
\label{lem:Property3} 
Let $\G_1$ and $\G_2$ be two graphs with one common vertex $v$. Then 
\[
\begin{split} 
\mrq(\G_1 \cup \G_2) = &\ \mrq(\G_1 - v) + \mrq(\G_2 -v)\\
&+ \big(\mrq(\G_1) - \mrq(\G_1 - v)\big)\times \big(\mrq(\G_2) - \mrq(\G_2 - v)\big). 
\end{split} 
\]
In other words, the min-rank of $\G_1 \cup \G_2$ can be computed explicitly based on the min-ranks of $\G_1$, $\G_1-v$, $\G_2$, and $\G_2 - v$.  
\end{lemma}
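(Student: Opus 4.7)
Proof plan. Let $a_i = \mrq(\G_i)$, $b_i = \mrq(\G_i - v)$, and $\delta_i = a_i - b_i$. By Lemma~\ref{lem:Property1}, each $\delta_i \in \{0,1\}$, so $\delta_1 \delta_2 \in \{0,1\}$ and the claimed right-hand side equals $b_1 + b_2 + \delta_1 \delta_2$. Writing $V_i = \V(\G_i) \setminus \{v\}$ and noting that no edge of $\G_1 \cup \G_2$ joins $V_1$ with $V_2$, I order rows and columns of any matrix $M$ fitting $\G_1 \cup \G_2$ as $(V_1, v, V_2)$, so
\[
M = \begin{pmatrix} A & c_1 & 0 \\ d_1^T & \alpha & d_2^T \\ 0 & c_2 & B \end{pmatrix},
\]
where $A$ fits $\G_1 - v$, $B$ fits $\G_2 - v$, $\alpha \neq 0$, and the upper-left and lower-right blocks $M_1$, $M_2$ fit $\G_1$ and $\G_2$ respectively. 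All of the argument to follow rides on this block decomposition.

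For the lower bound, the matrix $\begin{pmatrix} A & 0 \\ 0 & B \end{pmatrix}$ is a submatrix of $M$, so $\rank(M) \geq \rank(A) + \rank(B) \geq b_1 + b_2$, which matches the target whenever $\delta_1 \delta_2 = 0$. When $\delta_1 = \delta_2 = 1$, I proceed by contradiction: if $\rank(M) = b_1 + b_2$, then the $v$-row of $M$ lies in the row span of the $V_1$- and $V_2$-rows. Since the $V_1$-rows have zero support on the $V_2$ columns and vice-versa, the realizing linear combination splits into a $V_1$-part and a $V_2$-part; restricting it to the $V_1 \cup \{v\}$ coordinates produces a matrix that still fits $\G_1$ and has rank at most $b_1$, contradicting $a_1 = b_1 + 1$.

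For the upper bound, the plan is to build $M$ explicitly. If $\delta_i = 1$, pick an optimal rank-$b_i$ matrix $A_i^*$ for $\G_i - v$ and take the block-diagonal extension $N_i = \begin{pmatrix} A_i^* & 0 \\ 0 & 1 \end{pmatrix}$, which fits $\G_i$ and has rank $a_i = b_i + 1$. If $\delta_i = 0$, pick any optimal $N_i$ fitting $\G_i$ of rank $b_i$; then its $V_i$-submatrix is automatically optimal for $\G_i - v$, its $v$-row lies in the row span of its $V_i$-rows, and after rescaling its diagonal entry at $v$ is $1$. Assembling the $N_i$'s into $M$ via the block form above, the case $\delta_1 = \delta_2 = 1$ produces a block-diagonal matrix with blocks $A_1^*, 1, A_2^*$ of rank $b_1 + 1 + b_2$, while in the remaining cases a short row-span calculation (using the redundancy of the $v$-row in the $\delta_i = 0$ block) yields $\rank(M) = b_1 + b_2$.

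The main obstacle is the lower-bound argument when $\delta_1 = \delta_2 = 1$: one must convert the absorption of the $v$-row into a \emph{genuine} fitting matrix for $\G_1$, respecting both the zero-pattern constraints on off-diagonal entries and the nonzero-diagonal constraint at $v$, rather than just a rank bound. A related but lighter issue arises in the upper-bound assembly for the mixed case $\delta_1 \neq \delta_2$, where one has to verify that, with the chosen scalings $\alpha_1 = \alpha_2 = 1$, the combined $v$-row of $M$ is genuinely a linear combination of its $V_i$-rows; this is routine linear algebra once the block setup is fixed, but the zero-pattern bookkeeping is where all of the care in the proof is concentrated.
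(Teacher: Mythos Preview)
The paper does not actually prove this lemma; it is quoted from \cite{BerlinerLangberg2011} and the authors explicitly refer the reader to \cite{BerlinerLangberg-full} for the proof. So there is no in-paper argument to compare against, and your plan must be judged on its own merits.

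Your block decomposition and case split on $(\delta_1,\delta_2)$ are the right framework, and the upper-bound constructions are fine. One small correction: the delicate upper-bound case is not the mixed case $\delta_1\neq\delta_2$ but rather $\delta_1=\delta_2=0$. If you try to merge the two $v$-rows $(d_1^T,1)$ and $(1,d_2^T)$ into a single row $(d_1^T,1,d_2^T)$, that row need not lie in the span of the $V_1$- and $V_2$-rows (over $\ft$ the obvious combination gives $v$-entry $0$). The clean fix is to take the $v$-row of $M$ to be $(d_1^T,1,0)$, i.e.\ use only the $\G_1$ side; this still fits $\G_1\cup\G_2$ (zeros are always allowed off-diagonal) and is manifestly redundant modulo the $V_1$-rows, giving rank exactly $b_1+b_2$.

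For the lower bound when $\delta_1=\delta_2=1$, you correctly flag the nonzero-diagonal constraint as the obstacle, but your sketch does not yet say how to overcome it. The missing observation is this: if the $v$-row of $M$ is $\sum_i\lambda_i(\text{row }i)+\sum_j\mu_j(\text{row }j)$ with $i\in V_1$, $j\in V_2$, then writing $\alpha_1'=\sum_i\lambda_i(c_1)_i$ and $\alpha_2'=\sum_j\mu_j(c_2)_j$ one has $\alpha_1'+\alpha_2'=\alpha\neq 0$, so at least one of them, say $\alpha_1'$, is nonzero. Replacing the $v$-row of $M_1$ by $(\sum_i\lambda_i A_i,\ \alpha_1')$ then gives a matrix that genuinely fits $\G_1$ (the off-diagonal zero pattern is inherited from $A$ and $c_1$, and the diagonal entry is $\alpha_1'\neq 0$) with rank at most $\rank(A)=b_1$, contradicting $a_1=b_1+1$. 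With this one extra sentence your plan is complete and correct.
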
  
\vskip 15pt 

\nin{\bf Algorithm 1:}\\
Suppose $\G \in \FF_\PP(c)$ and a relevant tree structure $\TT = (\Ga, T)$ of $\G$ is given. 
The algorithm computes the min-rank by dynamic programming in a bottom-up 
manner, from the leaves of $\TT$ to its root. Suppose that $\Ga = 
[\V_1, \V_2, \ldots, \V_k]$ and $\G_i$ is induced by $\V_i$ for $i \in [k]$. 
Let $v_i$ be the UC (if any) of $\G_i$ for $i \in [k]$. 
Recall that $\sS_i$ is the induced subgraph of $\G$ obtained by merging
$\G_i$ and all of its descendants in $\TT$. 
For each $i$, Algorithm 1 maintains a table which contains the 
two values, namely, min-ranks of $\sS_i$ and $\sS_i - v_i$.
The min-rank of the latter is omitted if $\G_i$ is the root node of $\TT$. 
An essential point is that the min-ranks of $\sS_i$ and $\sS_i - v_i$
can be computed in polynomial time from the min-ranks of $\sS_j$'s and 
$(\sS_j - v_j)$'s where $\G_j$'s are children of $\G_i$, and from the min-ranks of
at most $2^c$ induced subgraphs of $\G_i$. 
Each of these subgraphs is obtained from $\G_i$ by removing
a subset of a set that consists of at most $c$ vertices of $\G$.
When the min-rank of $\sS_{i_0}$ is determined, where $G_{i_0}$ 
is the root of $\TT$, the min-rank of $\G = \sS_{i_0}$ is found.\\ 

\nin {\bf At the leaf-nodes:}\\
Suppose $\G_i$ is a leaf and $v_i$ is its UC. 
Since $\G_i$ has no children, $\sS_i \equiv \G_i$. 
Hence, 
\[
\mrq(\sS_i) = \mrq(\G_i),
\]
and 
\[
\mrq(\sS_i-v_i) = \mrq(\G_i - v_i).
\] 
Since $\G_i \in \PP$, the graph $\G_i - v_i$, which is an induced subgraph of $\G_i$, also belongs to $\PP$ 
(according to the property (P1) of $\PP$). 
Therefore, both $\mrq(\G_i)$ and $\mrq(\G_i-v_i)$ can be computed in polynomial time.\\ 

\nin {\bf At the intermediate nodes:}\\
Suppose the min-ranks of $\sS_j$ and $\sS_j-v_j$ are known for every child $\G_j$ of $\G_i$. 
The goal of the algorithm at this step is to compute the min-ranks 
of $\sS_i$ and $\sS_i - v_i$ in polynomial time. 
It is complicated to analyze directly the general case where $\G_i$ has an arbitrary number (at most $c$) of downward connectors. 
Therefore, we first consider a special case where $\G_i$ has only one downward connector (Case~1). The results established in this case are then used to investigate the general case (Case~2). 
\\

\noindent {\bf Case 1:}	$\G_i$ has only one DC $u$ and has 
$r$ children, namely $\G_{j_1}, \G_{j_2}, \ldots, \G_{j_r}$, 
all of which are connected to $\G_i$ via $u$ (Figure~\ref{fig:one_dc}).
\begin{figure}[H]
\centering
\includegraphics{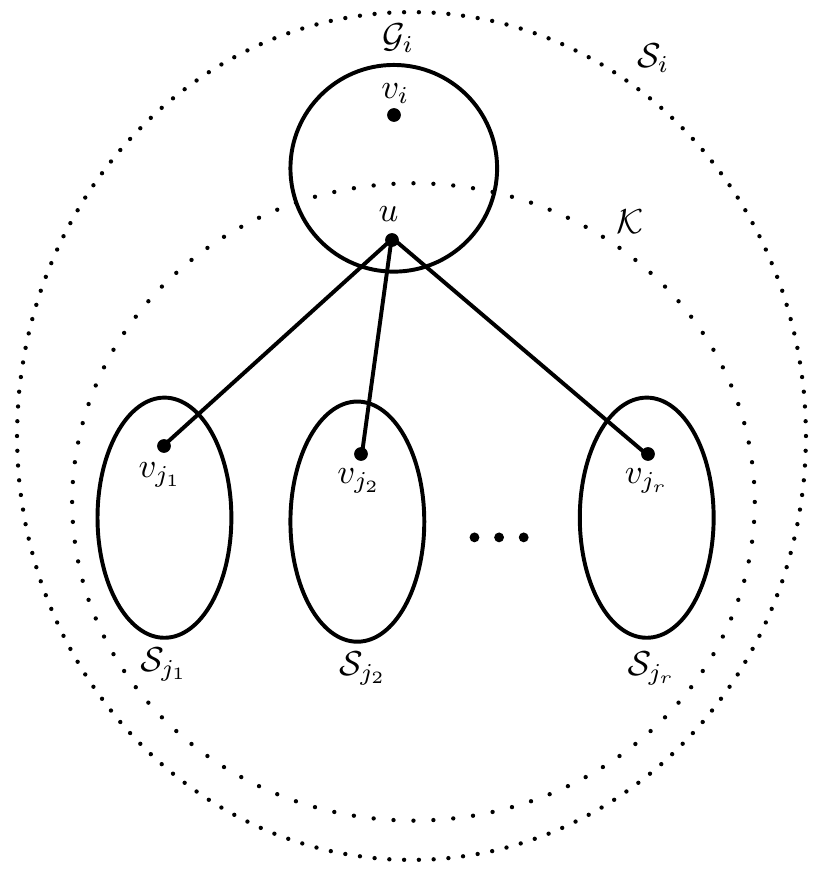}
\caption{$\G_i$ has only one downward connector}
\label{fig:one_dc}
\end{figure}	

Let $\K$ be the subgraph of $\G$ induced by the following set
of vertices
\[
\V(\K) = \V(\sS_{j_1}) \cup \V(\sS_{j_2}) \cup \cdots \cup \V(\sS_{j_r}) \cup \{u\}. 
\] 	
Notice that the graphs $\G_i$ and $\K$ have exactly one vertex in common, namely, $u$. Hence by Lemma~\ref{lem:Property3},
once the min-ranks of $\G_i$, $\G_i - u$, $\K$, and $\K-u$ are known, the min-rank of $\sS_i = \G_i \cup \K$ can be explicitly computed. Similarly, if $v_i \neq u$ and the min-ranks of $\G_i - v_i$, $\G_i - v_i - u$, $\K$, and $\K-u$ are known, the min-rank of $\sS_i - v_i = (\G_i - v_i) \cup \K$ can be explicitly computed. Observe also that 
if $v_i \equiv u$ then by Lemma~\ref{lem:connected_components},
\[
\mrq(\sS_i - v_i ) = \mrq(\G_i-u) + \mrq(\K-u). 
\]
Again by Lemma~\ref{lem:connected_components}, 
\[
\mrq(\K-u) = \sum_{\ell = 1}^r \mrq(\sS_{j_\ell}),
\]	
which is known. Moreover, as $\G_i \in \PP$, the min-ranks of $\G_i$, $\G_i-v_i$, $\G_i - u$, and $\G_i-v_i-u$ 
can be determined in polynomial time. 
Therefore it remains to compute the min-rank of $\K$ efficiently. 
According to the following claim, the min-rank of $\K$ can be explicitly computed based on the knowledge of the min-ranks of $\sS_{j_\ell}$ and $\sS_{j_\ell}- v_{j_\ell}$ for $\ell \in [r]$.
Note that by Lemma~\ref{lem:Property1}, either $\mrq(\sS_{j_\ell}- v_{j_\ell}) = 
\mrq(\sS_{j_\ell})$ or $\mrq(\sS_{j_\ell}- v_{j_\ell}) = \mrq(\sS_{j_\ell}) - 1$, $\ell \in [r]$. 

\vskip 10pt 
\begin{lemma}
\label{lem:claim1}
The min-rank of $\K$ is equal to
\[ 
\begin{cases}
\mrq(\K-u), &\text{ if } \exists h \in [r] \text{ s.t. } \mrq(\sS_{j_h}-v_{j_h}) = \mrq(\sS_{j_h}) - 1,\\
\mrq(\K-u) + 1, &\text{ otherwise.}
\end{cases}
\]
\end{lemma}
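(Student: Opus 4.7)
The plan is to decompose $\K$ as an iterated union of small pieces, each sharing only the single vertex $u$, and then apply Lemma~\ref{lem:Property3} repeatedly. Concretely, for each $\ell \in [r]$ let $\HH_\ell$ be the subgraph of $\K$ induced by $\{u\} \cup \V(\sS_{j_\ell})$; since the only edge of $\K$ between $u$ and $\V(\sS_{j_\ell})$ is $\{u, v_{j_\ell}\}$, the graph $\HH_\ell$ is obtained from $\sS_{j_\ell}$ by attaching the pendant edge $\{u, v_{j_\ell}\}$. Define $\K^{(\ell)} = \HH_1 \cup \HH_2 \cup \cdots \cup \HH_\ell$, so that $\K^{(r)} = \K$, and note that any two $\HH_\ell$'s share exactly the vertex $u$ (since the subtrees $\sS_{j_1},\ldots,\sS_{j_r}$ are pairwise vertex-disjoint).

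First I would compute $\mrq(\HH_\ell)$ by applying Lemma~\ref{lem:Property3} to $\HH_\ell = \sS_{j_\ell} \cup \{u,v_{j_\ell}\text{-edge}\}$ with common vertex $v_{j_\ell}$. Using that a single edge has min-rank $1$ and a single vertex has min-rank $1$, this gives $\mrq(\HH_\ell) = \mrq(\sS_{j_\ell} - v_{j_\ell}) + 1$. Setting $\delta_\ell \define \mrq(\sS_{j_\ell}) - \mrq(\sS_{j_\ell} - v_{j_\ell}) \in \{0,1\}$ (the two values being the only possibilities by Lemma~\ref{lem:Property1}), I get $\mrq(\HH_\ell) - \mrq(\HH_\ell - u) = \mrq(\HH_\ell) - \mrq(\sS_{j_\ell}) = 1 - \delta_\ell$.

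Next I would set $\epsilon_\ell \define \mrq(\K^{(\ell)}) - \mrq(\K^{(\ell)}-u)$ and establish by induction the recursion $\epsilon_\ell = \epsilon_{\ell-1} \cdot (1 - \delta_\ell)$ with base case $\epsilon_1 = 1 - \delta_1$. For the induction step I apply Lemma~\ref{lem:Property3} to $\K^{(\ell)} = \K^{(\ell-1)} \cup \HH_\ell$ with common vertex $u$, using that $\K^{(\ell)} - u = \sS_{j_1} \sqcup \cdots \sqcup \sS_{j_\ell}$ is a vertex-disjoint union (no cross-edges because in the tree structure distinct children of $\G_i$ are not connected to each other except through $u$), so Lemma~\ref{lem:connected_components} yields $\mrq(\K^{(\ell)}-u) = \sum_{\ell' \leq \ell} \mrq(\sS_{j_{\ell'}})$, and analogously $\mrq(\HH_\ell - u) = \mrq(\sS_{j_\ell})$. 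Substituting these expressions into the formula of Lemma~\ref{lem:Property3} and simplifying gives exactly $\epsilon_\ell = \epsilon_{\ell-1}(1-\delta_\ell)$.

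Unfolding the recursion yields $\epsilon_r = \prod_{\ell=1}^r (1-\delta_\ell) \in \{0,1\}$, which equals $0$ precisely when there exists some $h \in [r]$ with $\delta_h = 1$ and equals $1$ otherwise. Since $\mrq(\K) - \mrq(\K-u) = \epsilon_r$, this is exactly the dichotomy stated in the lemma. I do not foresee a major technical obstacle beyond keeping bookkeeping of $\K^{(\ell)}-u$ clean when invoking Lemma~\ref{lem:connected_components}; the crux is simply recognizing that $\K$ is a $u$-centered "bouquet" of the $\HH_\ell$'s, which lets Lemma~\ref{lem:Property3} reduce the computation to a product formula.
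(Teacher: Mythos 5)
Your proof is correct, and it takes a genuinely different route from the paper. The paper splits into two cases: when some $\delta_h=1$ it \emph{constructs an explicit matrix} fitting $\K$ of rank $\mrq(\K-u)$ (giving the upper bound, with Lemma~\ref{lem:Property1} supplying the matching lower bound), and only when all $\delta_\ell=0$ does it run an induction on $r$ via Lemma~\ref{lem:Property3}. You instead treat both cases uniformly: first apply Lemma~\ref{lem:Property3} with common vertex $v_{j_\ell}$ to get $\mrq(\HH_\ell)-\mrq(\HH_\ell-u)=1-\delta_\ell$, then apply Lemma~\ref{lem:Property3} again with common vertex $u$ to produce the recursion $\epsilon_\ell=\epsilon_{\ell-1}(1-\delta_\ell)$, so that $\mrq(\K)-\mrq(\K-u)=\prod_{\ell=1}^r(1-\delta_\ell)$ and the stated dichotomy is read off at once. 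Your approach avoids the hands-on matrix construction entirely and is arguably cleaner, because it exposes the multiplicative structure already latent in Lemma~\ref{lem:Property3}; the paper's case-1 construction, on the other hand, is more self-contained since it gives an explicit witness matrix (which is useful if one wants the optimal index code, not merely its length). One small remark: your recursion relies on $\mrq(\K^{(\ell)}-u)=\mrq(\K^{(\ell-1)}-u)+\mrq(\HH_\ell-u)$, which indeed follows from Lemma~\ref{lem:connected_components} because the subtrees $\sS_{j_1},\ldots,\sS_{j_r}$ are pairwise vertex-disjoint and have no edges between them in $\G$ (their indices are pairwise non-adjacent in $T$); you gesture at this but it is worth stating precisely since it is the one place the tree structure is essential.
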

\begin{proof} 
Suppose there exists $h \in [r]$ such that 
\[
\mrq(\sS_{j_h}-v_{j_h}) = \mrq(\sS_{j_h}) -1.
\] 
By Lemma~\ref{lem:Property1}, 
\[
\mrq(\K) \geq \mrq(\K-u).
\]
Therefore, in this case
it suffices to show that a matrix that fits $\K$ and has rank equal to $\mrq(\K-u)$ exists. 
Indeed, such a matrix $\bM$ can be constructed as follows. 
The rows and columns of $\bM$ are labeled by the elements in $\V(\K)$ 
(see Definition~\ref{def:mr-def}). 
Moreover, $\bM$ satisfies the following 
properties:
\begin{enumerate}
\item
Its sub-matrix restricted to the rows and columns labeled by the elements in 
$\V(\sS_{j_\ell})$ ($\ell \neq h$) fits $\sS_{j_\ell}$ and 
has rank equal to $\mrq(\sS_{j_\ell})$; 
\item
Its sub-matrix restricted to the rows and columns labeled by the elements in 
$\V(\sS_{j_h}) \setminus \{v_{j_h}\}$ fits $\sS_{j_h}-v_{j_h}$ 
and has rank equal to $\mrq(\sS_{j_h}-v_{j_h})$;
\item
$\bM_{u} = \bM_{v_{j_h}} = \be_u + \be_{v_{j_h}}$, 
where $\be_v$ for $v \in \V(\K)$ denotes the unit vector (with coordinates labeled by
the elements in $\V(\K)$) that has a one at the $v$th coordinate and zeros elsewhere;
Recall that $\bM_u$ denotes the row of $\bM$ labeled by $u$;
\item 
All other entries are zero.  
\end{enumerate}
Since the sets $\V(\sS_{j_\ell})$ ($\ell \neq h$), $\V(\sS_{j_h}) \setminus \{v_{j_h}\}$, 
and $\{u, v_{j_h}\}$ are pairwise disjoint, the above requirements can be met 
without any contradiction arising. Clearly $\bM$ fits $\K$. Moreover, 
\[
\begin{split} 
\rank(\bM) 
&= \sum_{\ell \neq h} \rank\big(\bM_{\V(\sS_{j_\ell})}\big) 
+ \rank\big(\bM_{\V(\sS_{j_h}) \setminus \{v_{j_h}\}}\big)
+\rank\big( \bM_{\{u, v_{j_h}\}} \big)\\
&= \sum_{\ell \neq h}\mrq \big(\sS_{j_\ell}\big) + \mrq\big(\sS_{j_h}-v_{j_h}\big) + 1\\
&= \sum_{\ell \neq h}\mrq\big(\sS_{j_\ell}\big) + \mrq\big(\sS_{j_h}\big)\\
&= \mrq(\K - u). 
\end{split} 
\] 

We now suppose that $\mrq(\sS_{j_\ell} - v_{j_\ell}) = \mrq(\sS_{j_\ell})$ for all $\ell \in [r]$. We prove that 
\[
\mrq(\K) = \mrq(\K - u) + 1
\] 
by induction on $r$.
\begin{enumerate}
\item 
The base case: $r = 1$ (Figure~\ref{fig:base}). In this case, $\G_i$ has only $r =1$ child.  
\begin{figure}[H]
\centering
\includegraphics{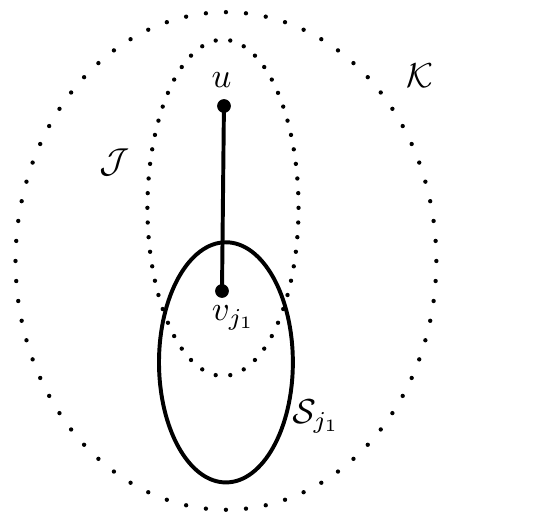}
\caption{The base case when $r = 1$}
\label{fig:base}
\end{figure}
\vskip 10pt 
Let $\J = (\V(\J), \E(\J))$ where $\V(\J) = \{u, v_{j_1}\}$ and 
$\E(\J) = \big\{\{u, v_{j_1}\}\big\}$. Then $\K = \sS_{j_1} \cup \J$ and
$\V(\sS_{j_1}) \cap \V(\J) = \{v_{j_1}\}$. 
Moreover, 
\[
\mrq\big(\sS_{j_1}\big) = \mrq\big(\sS_{j_1}-v_{j_1}\big).
\] 
Therefore by Lemma~\ref{lem:Property3}, 
\[
\begin{split} 
\mrq(\K) &=  \mrq(\sS_{j_1} - v_{j_1}) + \mrq(\J-v_{j_1})\\
&= \mrq(\sS_{j_1}) + 1\\
&= \mrq(\K-u) + 1.
\end{split}
\]
\item 
The inductive step: suppose that the assertion holds for $r \geq 1$. 
We aim to show that it also holds for $r+1$ (Figure~\ref{fig:inductive}). 
\begin{figure}[htb]
\centering
\includegraphics{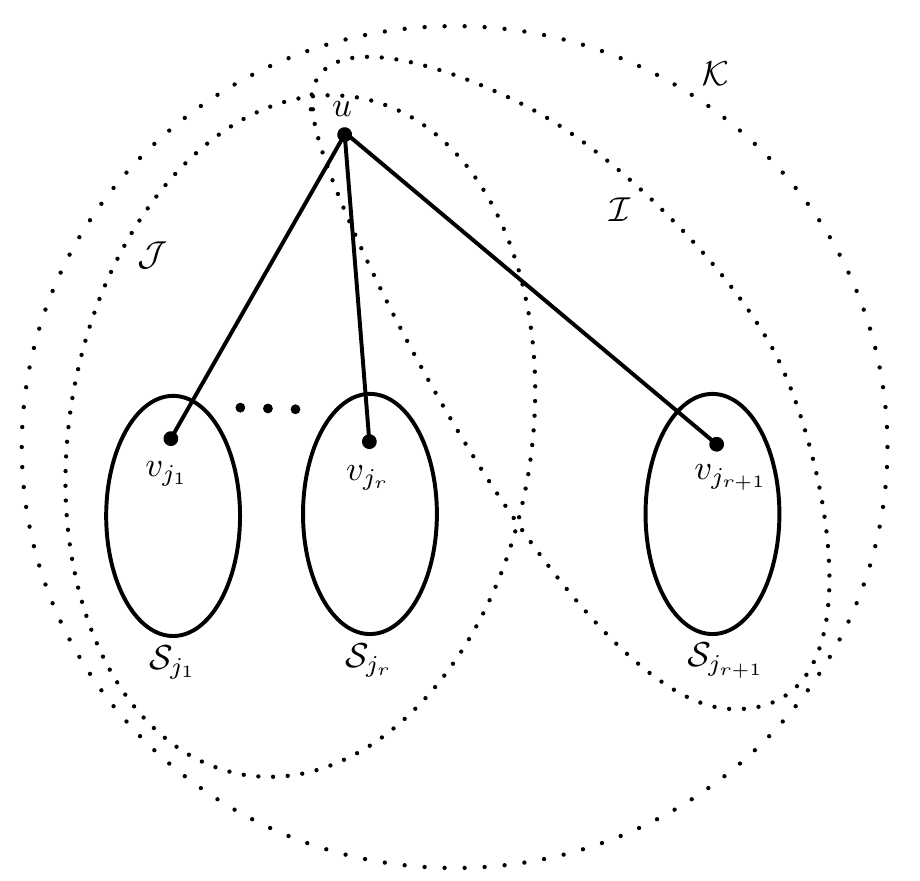}
\caption{The inductive step}
\label{fig:inductive}
\end{figure}

Let $\J$ be the subgraph of $\G$ induced by 
\[
\{u\}\cup \Big(\cup_{\ell =1}^r \V(\sS_{j_\ell}) \Big). 
\]
Since $\mrq(\sS_{j_\ell}-v_{j_\ell}) = \mrq(\sS_{j_\ell})$ for 
all $\ell \in [r]$, by the induction hypothesis, we have
\[
\mrq(\J) = \mrq(\J - u) + 1. 
\]
Let $\I$ be the subgraph of $\G$ induced by $\{u\} \cup \V(\sS_{j_{r+1}})$. As 
\[
\mrq\big(\sS_{j_{r+1}}-v_{j_{r+1}}\big) = \mrq\big(\sS_{j_{r+1}}\big), 
\]
similar arguments as in the base case yield 
\[
\mrq(\I) = \mrq(\I - u) +1. 
\]
Applying Lemma~\ref{lem:Property3} to the graphs $\I$ and 
$\J$ we obtain
\[
\begin{split} 
\mrq(\K) &= \mrq(\I \cup \J)\\
&= \mrq(\I - u) + \mrq(\J-u) + 1\\
&=\mrq\big( \sS_{j_{r+1}} \big) + \sum_{\ell = 1}^r \mrq\big(\sS_{j_\ell}\big) + 1\\
&= \sum_{\ell = 1}^{r+1} \mrq\big(\sS_{j_\ell}\big) + 1, 
\end{split} 
\]
which is equal to $\mrq(\K-u) + 1$.  \qedhere
\end{enumerate}
\end{proof} 
According to the discussion preceding Lemma~\ref{lem:claim1}, Case 1 is settled. 

\vskip 10pt 
\noindent {\bf Case 2:} $\G_i$ has $d$ DCs ($2 \leq d \leq c$), namely, $u_1, u_2, \ldots, u_d$
(Figure~\ref{fig:case2}). 
Let $\{\G_j: \ j \in I_t\}$ be the set of children of $\G_i$ connected to $\G_i$ via $u_t$,
 for $1 \leq t \leq d$.

\begin{figure}[h]
\centering
\includegraphics{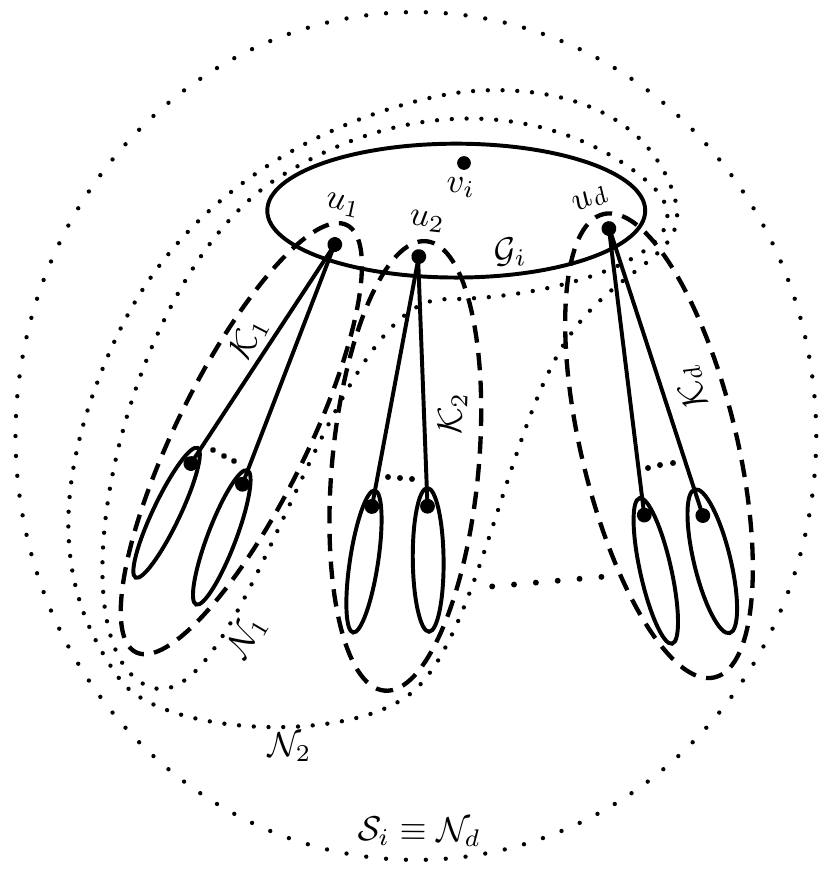}
\caption{$\G_i$ has $d$ downward connectors.  
The solid ellipse on the top represents $\G_i$. The dashed ellipses represent $\K_t$'s.  
The dotted closed curves represent $\N_t$'s. 
}
\label{fig:case2}
\end{figure}

Recall that the goal of the algorithm is to compute the min-ranks of $\sS_i$ and $\sS_i - v_i$ in polynomial time, given that the min-ranks of $\sS_j$ and $\sS_j-v_j$ are known for all children $\G_j$'s of $\G_i$. 

For each $t \in [d]$ let $\K_t$ be the subgraph of $\G$ induced by the following set of vertices
\[
\{u_t\} \cup \Big(\cup_{j\in I_t} \V(\sS_j)\Big). 
\]
As proved in Case 1, based on the min-ranks of $\sS_j$ and $\sS_j-v_j$ for $j \in I_t$, it is 
possible to compute the min-ranks of $\K_t$ and $\K_t - u_t$ explicitly for all $t \in [d]$. 
Let 
\[
\N_1 =  \G_i \cup \K_1,
\] 
and 
\[
\N_t = \N_{t-1} \cup \K_t,
\] 
for every $t \in [d]$ and $t \geq 2$. 
Observe that $\N_d \equiv \sS_i$.
Below we show how the algorithm computes the min-ranks of $\N_d$ and $\N_d-v_i$ recursively in polynomial time.

\vskip 10pt 
\begin{lemma}
\label{lem:recursive}
For every $t \in [d]$ and every $U \subseteq \{v_i, u_{t+1}, u_{t+2}, \ldots, u_d\}$, 
the min-rank of $\N_t - U$ can be calculated in polynomial time.   
\end{lemma}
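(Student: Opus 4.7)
The plan is to prove Lemma~\ref{lem:recursive} by induction on $t$, repeatedly decomposing $\N_t$ along the cut vertex $u_t$ via Lemma~\ref{lem:Property3}. The crucial structural observation, which follows from requirements (R2) and (R3), is that for every $t \ge 1$ the graphs $\N_{t-1}$ and $\K_t$ have exactly one vertex in common, namely $u_t$: the parts $\V(\sS_j)$ for $j$ in distinct subtrees are disjoint, and by construction $\K_t$ meets $\V(\G_i)$ only in $\{u_t\}$, while no edge of $\G$ crosses between $\N_{t-1}\setminus\{u_t\}$ and $\K_t\setminus\{u_t\}$ (any such edge would violate the tree structure, since a node $\sS_j$ with $j\in I_t$ is attached to the rest of $\G$ solely through its UC, which is adjacent to $u_t$).

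For the base case $t=1$, I would observe that $U \subseteq \{v_i, u_2,\ldots,u_d\}\subseteq \V(\G_i)$ does not contain $u_1$, so $U\cap \V(\K_1)=\varnothing$ and $\N_1-U=(\G_i-U)\cup \K_1$, the two pieces still sharing only $u_1$. Applying Lemma~\ref{lem:Property3} to this decomposition expresses $\mrq(\N_1-U)$ in terms of $\mrq(\G_i-U)$, $\mrq(\G_i-U-u_1)$, $\mrq(\K_1)$, and $\mrq(\K_1-u_1)$. The first two are computable in polynomial time because $\G_i\in\PP$ and $\PP$ is closed under taking induced subgraphs; the latter two have already been computed in Case~1 via Lemma~\ref{lem:claim1} (together with Lemma~\ref{lem:connected_components}). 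Since $|U|$ is at most $c$, there are at most $2^c=O(1)$ such subsets to handle.

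For the inductive step, assume that $\mrq(\N_{t-1}-U')$ can be computed in polynomial time for every $U'\subseteq\{v_i, u_t, u_{t+1},\ldots,u_d\}$. Fix $U\subseteq\{v_i, u_{t+1},\ldots,u_d\}$; then $u_t\notin U$ and so $U\cap \V(\K_t)=\varnothing$, giving $\N_t-U=(\N_{t-1}-U)\cup \K_t$ with common vertex exactly $u_t$. Lemma~\ref{lem:Property3} then yields
\[
\mrq(\N_t-U)=\mrq\bigl(\N_{t-1}-U-u_t\bigr)+\mrq(\K_t-u_t)+\bigl(\mrq(\N_{t-1}-U)-\mrq(\N_{t-1}-U-u_t)\bigr)\bigl(\mrq(\K_t)-\mrq(\K_t-u_t)\bigr).
\]
Both $\mrq(\N_{t-1}-U)$ and $\mrq(\N_{t-1}-U-u_t)$ are available by the inductive hypothesis (the latter because $U\cup\{u_t\}\subseteq\{v_i,u_t,\ldots,u_d\}$), while $\mrq(\K_t)$ and $\mrq(\K_t-u_t)$ are furnished by Case~1 applied to the subtree rooted at $\G_i$ through $u_t$. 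Taking $t=d$ and $U\in\{\varnothing,\{v_i\}\}$ recovers both $\mrq(\sS_i)$ and $\mrq(\sS_i-v_i)$, which is what the bottom-up algorithm needs.

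The main thing to be careful about is the complexity bookkeeping rather than any deep combinatorics: at each level $t$ one must tabulate $\mrq(\N_t-U)$ for all $U\subseteq\{v_i,u_{t+1},\ldots,u_d\}$, a family of size at most $2^{d-t+1}\le 2^{c+1}$, and each table entry is obtained by one application of Lemma~\ref{lem:Property3} using $O(1)$ previously tabulated values together with $O(1)$ calls to the polynomial-time min-rank oracle for $\PP$ on induced subgraphs of $\G_i$. Because $c$ is a constant, all these tables have constant size, the recursion has depth $d\le c$, and the whole computation costs only a polynomial in $|\VG|$, which is exactly what Theorem~\ref{thm:dynamic1} requires.
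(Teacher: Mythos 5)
Your argument follows the paper's overall strategy (induction on $t$, decomposing along the cut vertex $u_t$, applying Lemma~\ref{lem:Property3}), but it has a genuine gap: you implicitly assume that the upward connector $v_i$ is distinct from every downward connector $u_t$. This need not hold. The upward connector of $\G_i$ is just some vertex of $\V_i$, and it may well coincide with one of the downward connectors. If $v_i \equiv u_t$ and $v_i \in U$, then $U \subseteq \{v_i, u_{t+1}, \ldots, u_d\}$ \emph{does} contain $u_t$, so the two claims on which your inductive step rests --- that $u_t \notin U$ and hence $U \cap \V(\K_t) = \varnothing$ --- are both false. In that situation $\V(\N_{t-1}-U)$ and $\V(\K_t)$ have \emph{no} common vertex (the single shared vertex $u_t$ has been deleted), so the decomposition is $\N_t - U = (\N_{t-1}-U) \cup (\K_t - u_t)$, a disjoint union of two graphs, and Lemma~\ref{lem:Property3} does not apply. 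One must instead invoke Lemma~\ref{lem:connected_components} and compute $\mrq(\N_t-U) = \mrq(\N_{t-1}-U) + \mrq(\K_t-u_t)$. The same issue occurs already in your base case $t=1$: you assert that $U \subseteq \{v_i, u_2, \ldots, u_d\}$ cannot contain $u_1$, which fails precisely when $v_i = u_1$ and $v_i \in U$.

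The paper handles this by splitting both the base and the recursive step into two subcases: one where $v_i \equiv u_t$ and $v_i \in U$ (resolved via Lemma~\ref{lem:connected_components}), and the complementary one (resolved via Lemma~\ref{lem:Property3}, exactly as you do). Your complexity bookkeeping --- tabulating $O(2^c)$ entries per level and using $O(1)$ lookups and oracle calls per entry --- is correct and matches the paper. The fix is therefore a small case distinction, not a change of strategy, but as written the proof is incomplete.
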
 
\begin{proof} 
\mbox{}
\begin{enumerate}
\item
At the base case, the min-ranks of $\N_1 - U$, for every subset $U \subseteq \{v_i, u_2, u_3,$ $\ldots, u_d\}$, are computed as follows. 

If $v_i \equiv u_1$ and $v_i \in U$, then 
\[
\N_1 - U =  (\G_i - U) \cup (\K_1 - u_1). 
\]
Since 
\[
\V(\G_i - U) \cap \V(\K_1-u_1) = \varnothing, 
\]
by Lemma~\ref{lem:connected_components}, 
\[
\mrq(\N_1-U) = \mrq(\G_i - U) + \mrq(\K_1 - u_1),  
\]
which is computable in polynomial time. 

Suppose that either $v_i \not\equiv u_1$ or $v_i \notin U$. By Lemma~\ref{lem:Property3}, since 
\[
\N_1 - U = (\G_i - U) \cup \K_1,
\]
and 
\[
\V(\G_i - U) \cap \V(\K_1) = \{u_1\},
\]
the min-rank of $\N_1-U$ can be determined based on the min-ranks of 
$\G_i-U$, $\G_i-U-u_1$, $\K_1$, and $\K_1-u_1$. 
The min-ranks of these graphs are either known or computable in polynomial time.
As $\mdct \leq c$, there are at most $2^d \leq 2^c$ (a constant) such subsets $U$. 
Hence, the total computation in the base case can be done in polynomial time.
 
\item
At the recursive step, suppose that the min-rank of $\N_{t-1}-U$, $ t \geq 2$, 
for every subset $U \subseteq \{v_i, u_t, u_{t+1},\ldots, u_d\}$ is known. 
Our goal is to show that the min-rank of $\N_t-V$ for every subset 
$V \subseteq \{v_i, u_{t+1}, u_{t+2}, \ldots, u_d\}$ can be determined in polynomial time. 
Note that there are at most $2^d \leq 2^c$ such subsets $V$. 

If $v_i \equiv u_t$ and $v_i \in V$, then
\[
\N_t - V = (\N_{t-1} - V) \cup (\K_t-u_t). 
\]
Moreover, as we have
\[
\V(\N_{t-1} - V) \cap \V(\K_t-u_t) = \varnothing,
\]
by Lemma~\ref{lem:connected_components}, 
\[
\mrq(\N_t - V) = \mrq(\N_{t-1} - V) + \mrq(\K_t-u_t),
\] 
which is known. Note that $\mrq(\N_{t-1} - V)$ is known from the previous recursive step
since 
\[
V \subseteq \{v_i, u_{t+1}, u_{t+2}, \ldots, u_d\} \subseteq \{v_i, u_t, u_{t+1},\ldots, u_d\}.
\]

Suppose that either $v_i \not\equiv u_t$ or $v_i \notin V$. 
Since 
\[
\N_t - V= (\N_{t-1} - V) \cup \K_t, 
\]
and 
\[
\V(\N_{t-1}-V) \cap \V(\K_t) = \{u_t\}, 
\]
the min-rank of $\N_t-V$ can be computed based on the min-ranks of 
$\N_{t-1}-V$, $\N_{t-1}-V-u_t$, $\K_t$, and $\K_t - u_t$, which are all available from the previous recursive step.   
\qedhere
\end{enumerate}
\end{proof} 
\vskip 10pt 

When the recursive process described in Lemma~\ref{lem:recursive} reaches $t = d$, 
the min-ranks of $\N_d$ and $\N_d- v_i$ are found, as desired. 
Moreover, as there are $d \leq c$ steps, and in each step, the computation can be done
in polynomial time, we conclude that the min-ranks of these graphs can be found
in polynomial time. 
The analysis of Case 2 is completed. 	

Let $\PP$ be a collection of finitely many families of graphs that satisfy (P1), (P2), and (P3)
(see Section~\ref{subsec:tree_structure}).
For any $c > 0$, let $\FF_\PP(c\log(\cdot))$ denote the following family of graphs 
\[
\Big\{ \G:\ \G \text{ is connected and has a ($\PP$) simple tree structure } 
\TT \text{ with } \mdct \leq c \log |\VG| \Big\}.
\]
Note that $\FF_\PP(c\log(\cdot))$ properly contains $\FF_\PP(c)$ as a sub-family. 
If $\G \in \FF_\PP(c\log(\cdot))$
for some constant $c > 0$, then the time complexity of Algorithm~1 is still polynomial
in $n = |\VG|$. 
Indeed, since $2^d \leq 2^{c\log n} = n^c$, Lemma~\ref{lem:recursive} still holds. 
As all other tasks in Algorithm~1 require polynomial time in $n$, we conclude that
the running time of the algorithm is still polynomial in $n$. 
However, as discussed in Section~\ref{subsec:recognition_algo}, 
we are not able to find a polynomial time algorithm to recognize a 
graph in $\FF_\PP(c\log(\cdot))$. 

\vskip 10pt
\begin{theorem}
\label{thm:dynamic2}
Let $\PP$ be a collection of finitely many families of graphs that satisfy (P1), (P2), and (P3)
(see Section~\ref{subsec:tree_structure}).
Let $c > 0 $ be a constant and $\G \in \FF_\PP(c\log(\cdot))$. Suppose further that a ($\PP$) simple tree structure $\TT = (\Ga, T)$ of $\G$ with $\mdct \leq c\log |\VG|$ is known. Then there is an algorithm that computes the min-rank of $\G$ in polynomial time.  
\end{theorem}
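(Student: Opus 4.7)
The plan is to reuse Algorithm~1 and its analysis verbatim, and only reverify that every step still runs in time polynomial in $n = |\VG|$ under the weaker bound $\mdct \leq c\log n$. Since the structural correctness of Algorithm~1 (leaf-node computations, Case~1, and the recursion in Case~2) does not depend on any particular bound on $\mdct$, only on finiteness of the number of DCs per node, it suffices to bound the running time.

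First I would traverse $\TT$ bottom-up exactly as before and, at each node $\G_i$ with $d \leq c\log n$ downward connectors $u_1,\ldots,u_d$, invoke the recursion of Lemma~\ref{lem:recursive}. The single place where $d$ enters the complexity is the number of subsets $U \subseteq \{v_i, u_{t+1}, \ldots, u_d\}$ maintained at the recursive step; this is at most $2^d$. Under the new hypothesis,
\[
2^d \;\leq\; 2^{c\log n} \;=\; n^{c},
\]
which is polynomial in $n$. Therefore Lemma~\ref{lem:recursive} still produces, at each $t$, a table of size at most $n^c$, and each table entry is computed from a constant number of previous entries together with min-ranks of induced subgraphs of $\G_i$ and of the graphs $\K_t$, $\K_t-u_t$ built from the children's data.

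Next I would check the auxiliary computations. The min-ranks of induced subgraphs $\G_i - U$ for $U$ a subset of the connectors of $\G_i$ lie in $\PP$ by property~(P1), so by~(P3) each is computable in polynomial time; there are at most $2^{d+1} \leq 2n^c$ such subgraphs at node $\G_i$, again polynomial. The recursive combinations rely on Lemma~\ref{lem:Property3} and Lemma~\ref{lem:connected_components}, which are closed-form identities and cost $O(1)$ per application. Summing over the at most $n$ nodes of $\TT$, the total running time is polynomial in $n$.

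The only potential obstacle is ensuring that the table maintained in the recursive step of Lemma~\ref{lem:recursive} does not blow up faster than $n^c$; but this is immediate because the subsets $U$ are drawn from a set of size at most $d+1 \leq c\log n + 1$, yielding at most $2 \cdot n^c$ entries per level. Since Algorithm~1 performs the bottom-up sweep in $n$ node-visits and each node-visit costs $\mathrm{poly}(n)$, the min-rank of $\G$ is output in polynomial time, establishing the theorem.
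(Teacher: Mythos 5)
Your proof is correct and follows the paper's argument essentially verbatim: the only place $\mdct$ enters the complexity of Algorithm~1 is through the factor $2^d$, and replacing the bound $d\le c$ by $d\le c\log n$ gives $2^d\le n^c$, still polynomial. The paper states exactly this in one line; your added bookkeeping about table sizes and per-node costs is a faithful elaboration of the same idea.
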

\vskip 10pt

\subsection{An Algorithm to Recognize a Graph in $\FF_\PP(c)$}
\label{subsec:recognition_algo}

In order for Algorithm~1 to work, it is assumed that a relevant tree structure of the input graph $\G \in \FF_\PP(c)$ is given. 
Therefore, the next question is how to design an algorithm that recognizes a graph in that family and
subsequently finds a relevant tree structure for that graph in polynomial time. 

\vskip 10pt 
\begin{theorem}
\label{thm:recognition1}
Let $\PP$ be a collection of finitely many families of graphs that satisfy (P1), (P2), and (P3)
(see Section~\ref{subsec:tree_structure}).
Let $c > 0$ be any constant. Then there is a polynomial time algorithm 
that recognizes a member of $\FF_\PP(c)$. Moreover, this algorithm also
outputs a relevant tree structure of that member.   
\end{theorem}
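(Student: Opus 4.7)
The plan is to reduce the recognition problem to a structural analysis of the bridge tree of $\G$. First, compute all bridges and $2$-edge-connected components of $\G$ in polynomial time (e.g., via Tarjan's algorithm), and form the bridge tree $\B$ whose vertices are the $2$-edge-connected components and whose edges are the bridges of $\G$.

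The key structural observation is that every $2$-edge-connected component of $\G$ must lie entirely within a single part $\V_i$ of any simple tree structure. Indeed, if a $2$-edge-connected component $C$ were split between two distinct parts $\V_i,\V_j$, the cut $(\V_i\cap C,\V_j\cap C)$ inside $C$ would contain at least two edges (since $C$ has no bridge), violating (R2); and if $C$ were split across three or more parts, the edges of $C$ between those parts would force a cycle in the contracted graph on those parts, violating (R3). Consequently every part is a connected subtree of $\B$ and every inter-part edge of the tree structure is a bridge of $\G$. Recognizing $\G\in\FF_\PP(c)$ thus reduces to choosing which bridges serve as tree edges of $T$ so that (a) each induced part belongs to a family in $\PP$ (checkable via (P2)) and (b) some rooting yields $\mdct\le c$. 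A useful preliminary pruning step is that, by (P1), every leaf of $\B$ must already induce a subgraph in $\PP$, for otherwise $\G\notin\FF_\PP(c)$; this can be checked upfront.

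The algorithm then performs a bottom-up dynamic program on $\B$ after an arbitrary rooting, processing $2$-edge-connected components in post-order. At each node $C$, it decides for each bridge from $C$ to a child of $C$ in $\B$ whether to mark it as a tree edge of $T$ (so the child begins a fresh part) or as internal (so the child merges into $C$'s growing part). To keep the state space polynomial, the DP at $C$ records only the subset of vertices currently designated as downward connectors of the growing part; since $\mdct\le c$ and $c$ is a fixed constant, the subset has size at most $c$, giving $O(n^{c})$ relevant states per node. Property~(P1) permits pruning (any superset of an already $\PP$-failing candidate also fails), and (P2) provides a polynomial-time $\PP$-membership check whenever a part is tentatively closed. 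Transitions at $C$ combine the child tables with the local choice of which children to merge upward, while enforcing that the accumulated DC-set never exceeds $c$.

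The main obstacle is the joint interaction of requirement (R1), the $\mdct\le c$ bound, and the choice of rooting of $T$. The plan to handle the rooting is: after the DP completes, try each candidate part in turn as the root of $T$ (at most $O(n)$ choices) and verify $\mdct\le c$ under that rooting, since the DC-count of a part depends on whether one of its incident tree-edges of $T$ is oriented upward. If the DP finds no valid partition or no rooting satisfies the bound, output $\G\notin\FF_\PP(c)$; otherwise, backtrack through the DP to reconstruct and output a relevant tree structure. Combined with the polynomial-time bridge computation and the polynomial $\PP$-membership checks guaranteed by (P2), the overall algorithm runs in time polynomial in $n=|\VG|$ for every fixed constant $c>0$.
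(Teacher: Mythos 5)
Your opening structural move matches the paper's: both first reduce to the bridge decomposition (your $\B$ is essentially the paper's $T'$ produced by its Splitting Phase), and both rely on the observation that every bridgeless piece must sit inside a single part $\V_i$ (the paper's Lemma~\ref{lem:claim2}/\ref{lem:claim3}). Where you diverge is in the second half: the paper runs a \emph{greedy} Merging Phase separately for each of the $h$ candidate roots of $T'$, merging each visited node with a maximum set of leaf children whose union stays in $\PP$, and proves correctness via Lemma~\ref{lem:claim5} (at most $c$ essential DCs) and Lemma~\ref{lem:claim6} (nonessential branches collapse to leaves). You instead propose a single bottom-up dynamic program whose state is the DC-subset of the growing part, followed by a post-hoc choice of the root of $T$.

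That DP has a genuine gap. The DC-subset of the growing part at $C$ does \emph{not} determine the growing part itself: two different ways of merging descendants of $C$ can leave the same set of at most $c$ designated downward connectors while producing different vertex sets $P_1\neq P_2$ for the part-in-progress. When that part is eventually closed at $C$ or at some ancestor, the $\PP$-membership check needs the actual vertex set, and the two candidates may differ in whether the closed part lies in $\PP$. Property~(P1) lets you discard a candidate once it already fails $\PP$, but it does not let you collapse two $\PP$-consistent candidates with the same DC-subset into one DP cell, because after further merging with ancestor material one may stay in $\PP$ and the other may not. Worse, for a fixed DC-subset the number of distinct compatible growing parts can be exponential: take $\B$ a star whose center $C$ has $n$ leaf children all attached through the same vertex $u\in C$; every one of the $2^n$ subsets of leaves yields a growing part with DC-set $\{u\}$ (or $\varnothing$), so the promised $O(n^c)$ bound counts DC-subsets, not DP states. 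The paper sidesteps this entirely: its Merging Phase never enumerates which children to absorb one at a time, but instead tries all $O(|D_m|^c)$ subsets $E_m$ of \emph{DC vertices}, absorbs \emph{all} leaf children hanging from $E_m$, and proves (for the correct root $r_0$) that the set of nonessential DCs always yields a valid $E_m$ of size $\ge|D_m|-c$, so a maximum valid choice always exists and the result is a relevant tree structure.

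A second, smaller issue is the handling of the root of $T$. Your plan is to run the DP once (on $\B$ rooted arbitrarily) and only afterward scan over candidate roots of $T$ to check $\mdct\le c$. But the pruning rule ``the accumulated DC-set never exceeds $c$'' is already rooting-dependent: which bridge endpoints count as downward connectors of a part depends on the eventual orientation of $T$, which you do not fix until after the DP. A partition rejected by the DP under one implicit orientation might be feasible under another. The paper handles this by restarting the Merging Phase for each of the $O(n)$ candidate root nodes $\A_r$, so the DC/UC roles are consistent throughout each run; a correct version of your approach would need to re-root $\B$ and redo the DP for each candidate root rather than choose the root afterward.
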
 
\vskip 10pt 
In order to prove Theorem~\ref{thm:recognition1}, we introduce Algorithm 2 (Figure~\ref{fig:Algo-2}). 
This algorithm consists of two phases: Splitting Phase (Figure~\ref{fig:Algo-2-Splitting}), 
and Merging Phase (Figure~\ref{fig:Algo-2-Merging}).
\begin{figure}[h]
\centering{
\fbox{
\parbox{5in}{
\nin{\bf Algorithm 2:}\\
\nin{\bf Input:} A connected graph $\G=(\VG, \EG)$ and a constant $c > 0$.\\
\nin{\bf Output:} If $\G \in \FF_\PP(c)$, the algorithm prints out a confirmation message, namely ``$\G \in \FF_\PP(c)$'', and then returns a relevant tree structure of $\G$. 
Otherwise, it prints out an error message ``$\G \notin \FF_\PP(c)$''. \\
\nin{\bf Splitting Phase}\\
\nin{\bf Merging Phase}\\
}
}
}
\caption{Algorithm 2}
\label{fig:Algo-2}
\end{figure}
The general idea behind Algorithm 2 is the following. 
Suppose $\G \in \FF_\PP(c)$ and $\TT$ is a relevant tree structure of $\G$. 
In the Splitting Phase, the algorithm \emph{splits} $\G$ into a number of components
(induced subgraphs), which form the set of nodes of a ($\PP$) simple tree structure $\TT'$
of $\G$. It is possible that $\mdc(\TT') > c$, that is, $\TT'$ is not a relevant tree structure
of $\G$. However, it can be shown that each node of $\TT'$ is actually an induced
subgraph of some node of $\TT$. Based on this observation, the main task 
of the algorithm in the Merging Phase is to merge suitable nodes of $\TT'$
in order to turn it into a relevant tree structure of $\G$. Note though that this tree structure
might not be the same as $\TT$. 

\begin{figure}[htb]
\centering{
\fbox{
\parbox{5.5in}{
\nin{\bf Splitting Phase:}\\
\nin{\bf Initialization:} Create two empty queues, $\Q_1$ and $\Q_2$, 
which contains graphs as their elements. Push $\G$ into $\Q_1$.\mbox{}
\begin{algorithmic}
\WHILE {$\Q_1 \neq \varnothing$}
\FOR {$\A = (\V(\A), \E(\A)) \in \Q_1$}
\STATE{Pop $\A$ out of $\Q_1$;}
\IF {there exist $U$ and $\V$ that partition $\V(\A)$ and $\s_\A(U,V)=1^*$}
   \STATE{Let $\B$ and $\C$ be subgraphs of $\A$ induced by $U$ and $V$, respectively;}
	 \STATE{Push $B$ and $C$ into $\Q_1$;}
\ELSIF {$\A \in \PP$}
       \STATE{Push $\A$ into $\Q_2$;}
\ELSE \STATE{Print the error message ``$\G \notin \FF_\PP(c)$'' and exit;}
\ENDIF
\ENDFOR
\ENDWHILE \\
Suppose $\Q_2$ contains $h$ graphs $\A_1, \A_2, \ldots, \A_{h}$. 
Let $T'$ be a graph with $\V(T') = [h]$ and 
$\E(T') = \big\{\{\ell, m\}:\ \s_\G(\V(\A_\ell), \V(\A_m)) = 1\big\}$. 
\end{algorithmic}
}
}
}
\caption{Algorithm 2 -- Splitting Phase}
\label{fig:Algo-2-Splitting}
\end{figure}
\begin{figure}[htb]
\centering
\includegraphics{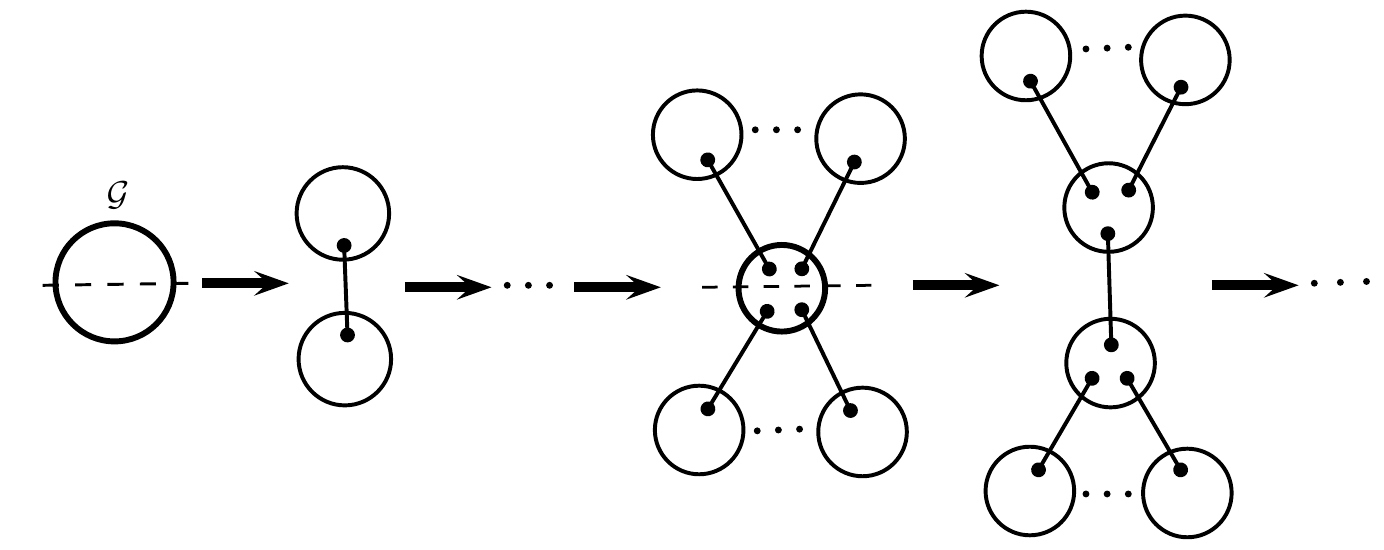}
\caption{Splitting Phase of Algorithm 2}
\label{fig:Splitting_Phase}
\end{figure}
\let\thefootnote\relax\footnotetext{$^*$ This condition is equivalent to that of $\A$ having a bridge}
Suppose $\G$ successfully passes the Splitting Phase, that is, 
no error messages are printed out during this phase. 
In the Splitting Phase, the algorithm first splits $\G$ 
into two components (induced subgraphs) that are connected to each other by exactly
one edge (bridge) in $\G$. 
It then keeps splitting the existing components, whenever possible, 
each into two new smaller components that are connected to each other 
by exactly one edge in the original component (see Figure~\ref{fig:Splitting_Phase}). 
A straightforward inductive argument shows the following:
\begin{enumerate}
	\item Throughout the Splitting Phase, the vertex sets that induce 
	        the components of $\G$ partition $\VG$; Hence $\V(\A_m)$'s, $m \in [h]$, partition $\VG$; 
	\item Throughout the Splitting Phase, any two different components of $\G$ 
	are connected to each other by at most one edge in $\G$; 
	Therefore, $\s_\G(\V(\A_\ell), \V(\A_m)) \in \{0,1\}$ for every $\ell \neq m$, $\ell, m \in [h]$;
	\item At any time during the Splitting Phase, the graph that is obtained from $\G$ 
	by contracting the vertex set of each component of $\G$ to a single vertex is a tree; 
	Therefore, $T'$ is a tree;
  \item	Throughout the Splitting Phase, every component of $\G$ remains connected; 
\end{enumerate}
It is also clear that each $\A_m$ ($m \in [h]$) belongs to a family in $\PP$. 
Since $\G$ passes the Splitting Phase successfully, $\TT' = (\Ga' = [\V(\A_1), \ldots, \V(\A_{h})], T')$ 
is already qualified to be a ($\PP$) simple tree structure of $\G$. 

\vskip 10pt 
\begin{lemma}  
\label{lem:claim2}
Suppose $\G \in \FF_\PP(c)$ and $\TT = (\Ga = [\V_1, \V_2, \ldots, \V_k], T)$
is a relevant tree structure of $\G$. Then at any time during the Splitting Phase, for any $\A \in \Q_1$, 
either of the following two conditions must hold:
\begin{enumerate}
	\item $\A$ has a bridge;
	\item $\V(\A) \subseteq \V_i$ for some $i \in [k]$. 
\end{enumerate}
\end{lemma}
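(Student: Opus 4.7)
The plan is to prove the statement by induction on the sequence of splits performed during the Splitting Phase. The key observation is that every element ever placed in $\Q_1$ is a connected induced subgraph of $\G$: this holds for the initial element $\G$ itself by assumption, and is preserved by splits, since removing a bridge from a connected graph produces exactly two connected components, which, by the way the algorithm pushes $\B$ and $\C$, remain induced subgraphs of $\G$. So it suffices to prove the following structural statement: if $\A$ is any connected induced subgraph of $\G$ and $\V(\A)$ is \emph{not} contained in any single $\V_i$, then $\A$ has a bridge.

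To prove this structural claim, I would project $\A$ onto the tree $T$. Define $T_\A$ as the graph with vertex set $\{i \in [k] : \V_i \cap \V(\A) \neq \varnothing\}$ and with an edge $\{i,j\}$ whenever there is an edge of $\A$ with one endpoint in $\V_i$ and the other in $\V_j$. By (R2), $\s_\G(\V_i,\V_j) \in \{0,1\}$, so every edge of $T_\A$ is also an edge of $T$; by (R3), $T_\A$ is therefore a subgraph of the tree $T$. Since $\A$ is connected, walking along any path of $\A$ and recording the indices $i$ for which consecutive vertices lie in $\V_i$ produces a walk in $T_\A$, so $T_\A$ is connected. A connected subgraph of a tree is itself a tree, and by hypothesis $T_\A$ has at least two vertices, hence at least one edge.

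Pick any edge $\{i_1,i_2\}$ of $T_\A$. By (R2) this edge corresponds to a \emph{unique} edge $e=\{u,v\}$ of $\G$ with $u \in \V_{i_1}$ and $v \in \V_{i_2}$, and by definition $e$ lies in $\A$. I claim $e$ is a bridge of $\A$. Suppose for contradiction that $\A - e$ is still connected; then there is a path in $\A$ from $u$ to $v$ avoiding $e$. Projecting this path onto $T_\A$ (collapsing consecutive vertices in the same $\V_j$) produces a walk in $T_\A$ from $i_1$ to $i_2$; since $e$ is the only edge of $\A$ joining $\V_{i_1}$ and $\V_{i_2}$, this walk avoids $\{i_1,i_2\}$, contradicting the fact that removing an edge from a tree disconnects its endpoints. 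Therefore $e$ is a bridge of $\A$, which completes the argument.

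The main obstacle is the tight bookkeeping in the bridge argument: one has to use both (R2) (which guarantees that $\{i_1,i_2\}$ lifts to a single specific edge of $\A$, so one can really equate ``avoiding $e$ in $\A$'' with ``avoiding $\{i_1,i_2\}$ in $T_\A$'') and (R3) (which makes $T_\A$ a tree by being a connected subgraph of $T$). Once these ingredients are lined up, the contradiction between the projected walk and the tree property of $T_\A$ yields the bridge and hence the lemma.
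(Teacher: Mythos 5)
Your proof is correct and complete, but it takes a genuinely different route from the paper's. The paper does not project onto the tree $T$: it instead directly exhibits a cut of size one in $\A$. Concretely, the paper observes that the $\V_i$'s meeting $\V(\A)$ are indexed by some $i_1,\dots,i_r$ ($r\geq 2$), chooses $i_r$ so that $\G_{i_r}$ has no children among $\G_{i_1},\dots,\G_{i_{r-1}}$ (a ``leaf'' of the relevant sub-tree of $T$), and sets $U=\V(\A)\cap\V_{i_r}$, $V=\V(\A)\setminus U$. Since $i_r$ is adjacent in $T$ to at most one of $i_1,\dots,i_{r-1}$ (its parent), (R2) gives $\s_\A(U,V)\leq \s_\G(\V_{i_r},\cup_{\ell<r}\V_{i_\ell})\leq 1$, and connectivity of $\A$ forces $\s_\A(U,V)=1$, yielding a bridge. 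Your argument instead forms the quotient $T_\A$, shows it is a connected subgraph of the tree $T$ (hence a tree) with an edge, and uses uniqueness of paths in trees plus (R2) to show that the single $\G$-edge realizing any edge of $T_\A$ is a bridge of $\A$. Both proofs rest on the same two pillars (R2 and R3 plus connectivity), but yours actually establishes the stronger fact that \emph{every} edge of $\A$ crossing between distinct parts $\V_i,\V_j$ is a bridge, whereas the paper only exhibits one such bridge; the paper's approach is slightly more economical in that it avoids constructing the quotient graph. One small point: your reliance on the preservation of connectivity under splitting deserves a sentence of justification -- if $U,V$ partition $\V(\A)$ with $\s_\A(U,V)=1$ and $\A$ is connected, then $U$ and $V$ are each unions of components of $\A-e$ (where $e$ is the unique crossing edge), and since $\A-e$ has exactly two components, $U$ and $V$ are exactly those components, hence the induced subgraphs pushed back into $\Q_1$ are connected. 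The paper records this as an observation preceding the lemma.
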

\begin{proof}
Suppose the second condition does not hold. 
Since $\V_1, \V_2, \ldots, \V_k$ partition $\VG \supseteq \V(\A)$,
there exist some $r \geq 2$ and some subset $\{i_1, i_2, \ldots, i_r\}$ of $[k]$
such that 
\[
\V(\A) \subseteq \cup_{\ell = 1}^r \V_{i_\ell},
\]
and  
\[
\V(\A) \cap \V_{i_\ell} \neq \varnothing, \ \forall \ell \in [r].
\]
We are to show that $\A$ has a bridge. 
Without loss of generality, suppose that $\G_{i_r}$ has no children (in $\TT$) among
$\G_{i_1},\G_{i_2},\ldots, \G_{i_{r-1}}$. Let $U = \V(\A) \cap \V_{i_r} \neq \varnothing$ and 
$V = \V(\A) \cap \cup_{1 \leq \ell \leq r-1}\V_{i_\ell} \neq \varnothing$. 
Then
\[ 
\s_\A(U, V) \leq \s_\G(\V_{i_r}, \cup_{1 \leq \ell \leq r-1}\V_{i_\ell}) \leq 1, 
\]
where the second inequality follows from the property of a ($\PP$) simple tree structure
and from the assumption that $\G_{i_r}$ has no children among
$\G_{i_1},\G_{i_2},\ldots, \G_{i_{r-1}}$.
As $U \cup V = \V(\A)$ and $\A$ is connected, it must hold that $\s_\A(U,V) = 1$. 
Hence, $\A$ has a bridge.  
\end{proof}

\vskip 10pt 
\begin{lemma}
\label{lem:phase1_terminates}
If $\G \in \FF_\PP(c)$ then $\G$ passes the Splitting Phase successfully. 
\end{lemma}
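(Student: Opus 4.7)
The plan is to show that the only way the Splitting Phase can fail (i.e., print the error message) is to pop from $\Q_1$ some graph $\A$ that (i) has no bridge — equivalently, admits no bipartition $(U,V)$ of $\V(\A)$ with $\s_\A(U,V) = 1$ — and simultaneously satisfies (ii) $\A \notin \PP$. It therefore suffices to rule out the simultaneous occurrence of (i) and (ii) whenever $\G \in \FF_\PP(c)$. Fix a relevant tree structure $\TT = (\Ga, T)$ of $\G$ with $\Ga = [\V_1, \V_2, \ldots, \V_k]$ and $\mdct \leq c$.

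The first step is to record a simple invariant maintained by the Splitting Phase: every graph ever pushed into $\Q_1$ is an induced subgraph of $\G$. This follows by induction on the splitting operations — the initial element is $\G$ itself, and each split of a current $\A \in \Q_1$ into $\B$ and $\C$ yields graphs that are induced by vertex subsets of $\A$, which is an induced subgraph of $\G$ by the inductive hypothesis; induced-of-induced is induced.

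Now, given any $\A$ popped from $\Q_1$, apply Lemma~\ref{lem:claim2}. If $\A$ has a bridge, the algorithm takes the splitting branch and no error is printed. Otherwise, by the lemma, $\V(\A) \subseteq \V_i$ for some $i \in [k]$. Combined with the invariant above, this means $\A$ is exactly the subgraph of $\G$ induced by $\V(\A)$, which — since $\V(\A) \subseteq \V_i$ and $\G_i$ is the $\V_i$-induced subgraph of $\G$ — coincides with the subgraph of $\G_i$ induced by $\V(\A)$. Since $\G \in \FF_\PP(c)$, condition (R1) places $\G_i$ in some family $\FF$ contained in $\PP$, and (P1) asserts that $\FF$ is closed under taking induced subgraphs. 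Hence $\A \in \FF \subseteq \PP$, so the algorithm takes the \emph{elsif} branch and pushes $\A$ into $\Q_2$ — again, no error.

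Since neither branch of the dichotomy triggers the error exit, $\G$ passes the Splitting Phase successfully. The only remaining concern is termination of the \textbf{while} loop, but this is standard: the potential $\Phi = \sum_{\A \in \Q_1} |\E(\A)|$ strictly decreases each time a bridge is removed (splitting deletes the bridge and preserves the total vertex count), while the other iterations transfer elements from $\Q_1$ to $\Q_2$, so $\Q_1$ empties after finitely many steps. I expect no serious obstacle here; the real content was already absorbed into Lemma~\ref{lem:claim2}, and the present lemma amounts to combining that dichotomy with property (P1).
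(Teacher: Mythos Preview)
Your proof is correct and follows essentially the same approach as the paper: both invoke Lemma~\ref{lem:claim2} for the bridge/subset dichotomy and then use (P1) to conclude $\A \in \PP$ in the second case. You add two details the paper leaves implicit or defers --- the invariant that every $\A$ in $\Q_1$ is an induced subgraph of $\G$, and a termination argument for the \textbf{while} loop (the paper handles termination separately in Lemma~\ref{lem:polynomial_time_ALGO-2}) --- but the core argument is the same.
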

\begin{proof}
Suppose $\TT = (\Ga = [\V_1, \V_2, \ldots, \V_k], T)$
is a relevant tree structure of $\G$.
By Lemma~\ref{lem:claim2}, for any $\A \in \Q_1$, either $\A$ has a bridge or 
$\V(\A) \subseteq \V_i$ for some $i \in [k]$. The latter condition
implies that $\A$ is an induced subgraph of $\G_i$, and hence,   
$\A \in \PP$. Therefore, $\G$ passes the Splitting Phase without 
any error message printed out. 
\end{proof} 

\vskip 10pt 
\begin{lemma} 
\label{lem:claim3}
Suppose $\G \in \FF_\PP(c)$ and $\TT = (\Ga = [\V_1, \V_2, \ldots, \V_k], T)$
is a relevant tree structure of $\G$. Then for each $m \in [h]$, there exists
a unique $i \in [k]$ such that $\V(\A_m) \subseteq \V_i$. 
\end{lemma}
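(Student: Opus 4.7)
The plan is to derive the claim directly from Lemma~\ref{lem:claim2}, using the fact that the graphs $\A_m$ are exactly those that survived all splitting attempts, i.e., those with no bridge.

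First I would observe how each $\A_m$ ends up in $\Q_2$. Inspecting the Splitting Phase, an element $\A$ of $\Q_1$ is pushed to $\Q_2$ only after it fails the bridge test, i.e., only when there is no partition $(U,V)$ of $\V(\A)$ with $\s_\A(U,V)=1$. Equivalently, at the moment $\A_m$ is popped from $\Q_1$ and transferred to $\Q_2$, the graph $\A_m$ has no bridge. Moreover $\A_m \in \Q_1$ at that moment, so Lemma~\ref{lem:claim2} applies to $\A_m$.

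Next I would invoke Lemma~\ref{lem:claim2}: either $\A_m$ has a bridge or $\V(\A_m)\subseteq \V_i$ for some $i\in[k]$. Since we have just ruled out the first alternative, the second alternative must hold, which gives existence of such an $i$.

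For uniqueness, I would appeal to the fact that $\Ga=[\V_1,\ldots,\V_k]$ is a partition of $\VG$. Each $\A_m$ is a nonempty induced subgraph of $\G$ (it is connected by the invariant maintained throughout the Splitting Phase, and was originally obtained by repeated splittings from $\G$), so $\V(\A_m)\neq\varnothing$. If both $\V(\A_m)\subseteq \V_i$ and $\V(\A_m)\subseteq \V_j$ with $i\neq j$ held, then any $v\in \V(\A_m)$ would lie in $\V_i\cap \V_j=\varnothing$, a contradiction. The only subtle point, and essentially the only thing requiring care, is noticing that the ``$\A_m$ has no bridge'' condition is exactly what the Splitting Phase guarantees for the contents of $\Q_2$; everything else is a clean application of Lemma~\ref{lem:claim2} together with the partition property of $\Ga$.
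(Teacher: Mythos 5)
Your proof is correct and follows essentially the same route as the paper: it uses the fact that elements of $\Q_2$ have no bridge, applies Lemma~\ref{lem:claim2} to get existence, and derives uniqueness from the partition property of $\Ga$. Your version simply spells out more explicitly why $\A_m$ has no bridge and why $\V(\A_m)$ is nonempty, but the argument is the same.
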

\begin{proof}
According to the algorithm, $\A_m$ does not have any bridge for every $m \in [h]$. 
By Lemma~\ref{lem:claim2}, for each $m \in [h]$, $\V(\A_m) \subseteq \V_i$ for some $i \in [k]$. 
The uniqueness of such $i$ follows from the fact that $\V_i \cap \V_j = \varnothing$
for every $i \neq j$. 
\end{proof}
\vskip 10pt

\begin{figure}[htb]
\centering{
\fbox{
\parbox{5.5in}{
\nin{\bf Merging Phase:}
\begin{algorithmic}
\FOR {$r = 1$ to $h$}
    \STATE{Let $\TT'_r$ be a copy of $\TT'$;}
		\STATE{Assign $\A_r$ to be the root node of $\TT'_r$;}
		\IF {$\mdc(\TT'_r) \leq c$}
		     \STATE{Print ``$\G \in \FF_\PP(c)$'', return $\TT'_r$, and exit;}
		\ELSE
		      \STATE{Let $\cL_r$ be an ordered list of nodes of $\TT'_r$ such that every node appears in the list later
					than all of its children;}
		      \FOR {$\A_m \in \cL_r$}
					     \STATE{Let $D_m$ be the list of all $\A_m$'s DCs;}
							\STATE{Find a maximum subset $E_m$ of $D_m$ with $|E_m| \geq |D_m| - c$, such that}
							    \STATE{
									\begin{itemize}
                      \item[] 1) The set $C_m$ of all children of $\A_m$ connected to $\A_m$ via DCs
									             in $E_m$ consists of only leaf nodes, and
							        \item[] 2) The set $\V(\A_m) \cup \big( \cup_{\A_\ell \in C_m} \V(\A_\ell)\big)$
							                 induces a subgraph of $\G$ which belongs to $\PP$;
							    \end{itemize}}
							\IF {there exists such a set $E_m$}
							      \STATE{Merge $\A_m$ and its children in $C_m$;}
							\ELSIF {$r = h$}
										      \STATE{Print ``$\G \notin \FF_\PP(c)$'' and exit;}
							\ELSE
													\STATE{Return to the outermost ``for'' loop;}
							\ENDIF
					\ENDFOR
				\STATE{Print ``$\G \in \FF_\PP(c)$'', return $\TT'_r$, and exit;}
		\ENDIF
\ENDFOR
\end{algorithmic}
}
}
}
\caption{Algorithm 2 -- Merging Phase}
\label{fig:Algo-2-Merging}
\end{figure}

As discussed earlier, after a successful completion of the Splitting Phase, 
a ($\PP$) simple tree structure of $\G$, that is
$\TT' = (\Ga' = [\V(\A_1), \ldots, \V(\A_{h})], T')$, 
is obtained. 
In the Merging Phase (Figure~\ref{fig:Algo-2-Merging}), the algorithm first assigns a root
node for $\TT'$. It then \emph{traverses} $\TT'$ in a bottom-up 
manner, tries to \emph{merge} every node it visits with a suitable set
of the node's leaf child-nodes (if any) to reduce the number of DCs of the node
below the threshold $c$. 
If such a set of children of the node cannot be found, then the algorithm 
restarts the whole merging process by assigning a different root node to the 
(original) tree structure $\TT'$ and traversing the tree structure again, from the leaves to the root. 
The algorithm stops when 
it finds a relevant tree structure, whose maximum number of DCs of every node is
at most $c$. 
If no relevant tree structures are found after trying out all possible assignments
for the root node, the algorithm claims that $\G \notin \FF_\PP(c)$ and exits. 

To preserve the tree structure $\TT'$ 
throughout the phase, only a copy of it, namely $\TT'_r$, is used when 
the node $\A_r$ is assigned as a root. 
Let $\cL_r$ be an ordered list of nodes of $\TT'_r$ such that every node appears
in the list later than all of its child-nodes.   
The algorithm visits each node in the list sequentially. 
The merging operation is described in more details as follows. 
Suppose $\A_m$ is the currently visited node, and $C_m$ is a set
of its leaf child-nodes, which is to be merged. The merging operation 
enlarges $\A_m$ by merging its vertex set with the vertex sets $\V(\A_\ell)$ for
all $\A_\ell \in C_m$. At the same time, the node $\A_\ell$ is deleted from the tree 
structure $\TT'_r$ for every $\A_\ell \in C_m$.
Observe that since $\A_m$ can only be merged with its leaf child-nodes,
no new DCs are introduced as a result of the merging operation.
Therefore, the merging operation never increases the number of 
DCs of the visited node. 
Observe also that a new ($\PP$) simple tree structure of $\G$
is obtained after every merging operation.

\vskip 10pt 
\begin{lemma}
\label{lem:membership_proof}
If Algorithm 2 terminates successfully then $\G \in \FF_\PP(c)$. 
\end{lemma}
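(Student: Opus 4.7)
The algorithm terminates successfully in one of two ways: either right after assigning $\A_r$ as the root when $\mdc(\TT'_r) \leq c$ already holds, or after the inner ``for'' loop over $\cL_r$ finishes for some $r$ without triggering a restart. My plan is to handle both cases by exhibiting the returned $\TT'_r$ as a relevant tree structure of $\G$, which by definition forces $\G \in \FF_\PP(c)$.

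The first case is immediate from what was already established: the successful completion of the Splitting Phase produces a $(\PP)$ simple tree structure $\TT'$ of $\G$ (this is the list of observations preceding Lemma~\ref{lem:claim2}), and $\TT'_r$ is a copy of $\TT'$ with $\A_r$ designated as the root, so it inherits all three requirements (R1)--(R3) and, by the test passed, has $\mdc(\TT'_r) \leq c$.

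For the second case, I would argue by induction on the number of merging operations performed that at every step of the loop the current structure is still a $(\PP)$ simple tree structure of $\G$. When $\A_m$ is merged with a set $C_m$ of its leaf children, requirement (R1) is preserved because the algorithm's merging criterion explicitly demands that the induced subgraph on $\V(\A_m) \cup \bigcup_{\A_\ell \in C_m} \V(\A_\ell)$ lies in $\PP$; requirement (R3) is preserved because contracting a parent together with some of its leaf children in a rooted tree yields another rooted tree; and requirement (R2) is preserved because the edge-counts $\s_\G(\cdot,\cdot)$ between the remaining parts of the partition of $\VG$ are simply inherited from $\TT'$. I also need the informal observation flagged in the text, namely that merging $\A_m$ with its \emph{leaf} children neither creates nor destroys any DC attached to any other surviving node of $\TT'_r$, since leaf children have no descendants that could have contributed edges to other parts.

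Finally, to bound $\mdc(\TT'_r)$ by $c$, I would fix any node $\A_m$ that survives in the final tree structure and look back at the moment it was visited by the for loop. Because the loop completed without restarting, the algorithm succeeded in finding $E_m \subseteq D_m$ with $|E_m| \geq |D_m| - c$; after the merging step, the set of DCs of the enlarged $\A_m$ is exactly $D_m \setminus E_m$, of size at most $c$. By the invariance observation above, no later merging operation, which by construction acts only on some ancestor together with its \emph{then}-leaf children, can alter this DC set. Hence every surviving node carries at most $c$ DCs, $\mdc(\TT'_r) \leq c$, and $\TT'_r$ is a relevant tree structure of $\G$, proving $\G \in \FF_\PP(c)$. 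I expect the main delicacy to lie in pinning down this DC-invariance cleanly: once it is in hand, the remaining requirements follow by routine bookkeeping.
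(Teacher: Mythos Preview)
Your proposal is correct and follows essentially the same approach as the paper's proof. If anything, you are more careful than the paper: you explicitly verify that (R1)--(R3) survive each merge and you isolate the DC-invariance observation, whereas the paper simply asserts that after merging $\A_m$ has $|D_m|-|E_m|\le c$ DCs and that ``this situation applies for every node'', without spelling out why later merges at ancestors cannot disturb that count.
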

\begin{proof} 
According to the discussion before Lemma~\ref{lem:claim2}, if $\G$ passes the Splitting Phase 
successfully then $\TT'$ is a ($\PP$) simple tree structure of $\G$. 
Suppose $\G$ also passes the Merging Phase successfully. 
According to the algorithm, 
there exists a copy $\TT'_r$ of $\TT'$ with root node $\A_r$
such that either $\mdc(\TT'_r) \leq c$ (hence $\G \in \FF_\PP(c)$)
or the following condition holds. 
At \emph{every} node $\A_m$ of $\TT'_r$ that the algorithm visits during the Merging Phase, 
there always exists a set of DCs $E_m$ of $\A_m$ satisfying:
\begin{enumerate}
 \item The set $C_m$ of all children of $\A_m$ connected to $\A_m$ via DCs
				in $E_m$ consists of only leaf nodes, and
 \item The set $\V(\A_m) \cup \big( \cup_{\A_\ell \in C_m} \V(\A_\ell)\big)$
							                 induces a subgraph of $\G$ which belongs to $\PP$.
\end{enumerate}
Moreover, $|E_m| \geq |D_m| - c$, where $D_m$ is the set of DCs of $\A_m$ in $\TT'_r$. 
Therefore, after merging $\A_m$ and its leaf child-nodes in $C_m$, 
$\A_m$ has $|D_m| - |E_m| \leq c$ DCs.
As this situation applies for every node $\A_m$ of $\TT'_r$, 
once the algorithm reaches the root node $\A_r$,  
we obtain a relevant tree structure of $\G$, which 
proves the membership of $\G$ in $\FF_\PP(c)$. 
\end{proof} 

\vskip 10pt 
\begin{lemma}
\label{lem:algo-2_terminates}
If $\G \in \FF_\PP(c)$ then Algorithm 2 terminates successfully.  
\end{lemma}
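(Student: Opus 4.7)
Let $\TT = (\Ga = [\V_1, \ldots, \V_k], T)$ be a relevant tree structure of $\G$, with $i_0$ denoting the root of $T$. By Lemma~\ref{lem:phase1_terminates}, the Splitting Phase terminates successfully, producing a tree $T'$ with nodes $\A_1, \ldots, \A_h$; by Lemma~\ref{lem:claim3}, there is a unique map $\phi \colon [h] \to [k]$ with $\V(\A_m) \subseteq \V_{\phi(m)}$ for every $m$. The plan is to exhibit a choice of root $\A_r$ in the Merging Phase under which every node $\A_m$ in the list $\cL_r$ admits a subset $E_m \subseteq D_m$ satisfying (1), (2), and $|E_m| \ge |D_m| - c$; this prevents the error branch from ever firing. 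I will take $\A_r$ to be any node with $\phi(r) = i_0$, since the Merging Phase eventually tries this root (before or at $r = h$) in the outermost loop.

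As a preliminary structural step, I would verify that for each $i$ the set $\phi^{-1}(i)$ induces a subtree of $T'$: otherwise some path in $T'$ would exit $\phi^{-1}(i)$ into $\phi^{-1}(i')$ and re-enter it, forcing at least two bridges of $\G$ between $\V_i$ and $\V_{i'}$, contradicting $\s_\G(\V_i, \V_{i'}) \le 1$. Once $T'$ is rooted at $\A_r$, each $\phi^{-1}(i)$ becomes a rooted subtree with a unique topmost node $\A_{m_i^*}$, and the parent of $\A_{m_i^*}$ in $T'_r$ (when $i \neq i_0$) lies in $\phi^{-1}(\text{parent}_T(i))$. For $\A_m$ with $\phi(m) = i$, call a DC $u$ of $\A_m$ \emph{within} if every child of $\A_m$ at $u$ lies in $\phi^{-1}(i)$, and \emph{hosting} otherwise; a hosting-DC is necessarily a DC-vertex of $\G_i$ in $\TT$, so summing over $\phi^{-1}(i)$ the hosting-DCs number at most $\mdct \le c$.

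At each $\A_m \in \phi^{-1}(i)$ I would analyze the candidate
\[
E_m^{\mathrm{safe}} = \{\, u \in D_m : u \text{ is a within-DC of } \A_m \text{ whose children are all currently leaves}\,\}.
\]
Condition~(1) holds by construction, and condition~(2) holds because the merged vertex set lies in $\V_i$, so the induced subgraph is an induced subgraph of $\G_i \in \PP$, which belongs to $\PP$ by~(P1). Let $n_1$ count the hosting-DCs of $\A_m$ and $n_2$ count the distinct hosting-DC vertices of $\G_i$ appearing in descendants of $\A_m$ in $\phi^{-1}(i)$ other than $\A_m$ itself; these two sets are disjoint subsets of the DCs of $\G_i$ in $\TT$, so $n_1 + n_2 \le c$. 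The excluded DCs $D_m \setminus E_m^{\mathrm{safe}}$ consist of the $n_1$ hosting-DCs of $\A_m$ together with the within-DCs of $\A_m$ that have at least one non-leaf within-child.

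The crux—and the main obstacle—is bounding the number of non-leaf within-children of $\A_m$ by $n_2$. I would prove by bottom-up inner induction on $\phi^{-1}(i)$ that a within-descendant $\A_{m''}$ ends as a non-leaf after its own processing only if its subtree inside $\phi^{-1}(i)$ contains at least one hosting-DC of $\G_i$: otherwise every within-descendant of $\A_{m''}$ has only within-DCs, and by the inductive hypothesis each such DC can be absorbed upward (condition~(2) is automatic by~(P1) because the merged sets stay inside $\V_i$), forcing $\A_{m''}$ to end as a leaf. Since the subtrees of $\phi^{-1}(i)$ rooted at distinct within-children of $\A_m$ are vertex-disjoint, distinct non-leaf within-children are charged to distinct hosting-DCs, giving the bound. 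Hence $|D_m \setminus E_m^{\mathrm{safe}}| \le n_1 + n_2 \le c$ at every $\A_m$, so the Merging Phase never fails under this choice of $\A_r$ and Algorithm~2 terminates successfully.
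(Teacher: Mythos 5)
Your overall strategy mirrors the paper's proof closely: root $T'$ at a node $\A_{r_0}$ with $\phi(r_0) = i_0$, show each $\phi^{-1}(i)$ induces a subtree of $T'$, bound the ``hosting''/essential structure attached to each $\A_m$ by $c$ via an injective charging to DC-vertices of $\G_i$ in $\TT$, and conclude the Merging Phase never fails. Your counting argument (distinct non-leaf within-children have vertex-disjoint subtrees, so they are charged to distinct hosting-DC vertices of $\G_i$, whence $n_1+n_2\le c$) is sound and is essentially the paper's Lemma~\ref{lem:claim5}. However, there is a genuine gap in your verification of condition~(2) for $E_m^{\mathrm{safe}}$.

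You assert that for $E_m^{\mathrm{safe}}$ (within-DCs of $\A_m$ whose children are all currently leaves) ``the merged vertex set lies in $\V_i$.'' This is not justified. A within-child $\A_\ell$ of $\A_m$ (so $\phi(\ell)=\phi(m)=i$) may itself possess hosting-DCs, and when the algorithm processed $\A_\ell$ it may well have merged $\A_\ell$ across the $\TT$-index boundary---absorbing descendants with $\TT$-index $j\ne i$---whenever the resulting induced subgraph happened to belong to $\PP$, since the algorithm simply takes a maximum-size admissible $E_\ell$ and nothing constrains that choice to stay inside $\V_i$. Such an $\A_\ell$ ends up as a leaf whose enlarged vertex set is \emph{not} contained in $\V_i$, and its DC is included in your $E_m^{\mathrm{safe}}$, yet merging $\A_m$ with it need not produce a graph in $\PP$. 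In other words, your classification into ``within'' vs.\ ``hosting'' is a property of the \emph{immediate} children, whereas what condition~(2) requires is a property of the \emph{entire} branch. The paper sidesteps exactly this by defining $E_m$ as the set of \emph{nonessential} DCs---those whose entire branch lies in $\phi^{-1}(i)$---and proving (Lemma~\ref{lem:claim6}) that the bottom-up merges inside such a branch stay within $\V_i$. Your argument is recoverable by strengthening the membership test for $E_m^{\mathrm{safe}}$ from ``all children at $u$ lie in $\phi^{-1}(i)$'' to ``the whole subtree hanging below $u$ lies in $\phi^{-1}(i)$'' (i.e., $u$ is nonessential); the excluded DCs then split into hosting-DCs of $\A_m$ and within-DCs whose subtree contains a hosting-DC, and the same injective charging still bounds $|D_m\setminus E_m|$ by $c$. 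As written, though, the condition-(2) claim is false.
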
 
\vskip 10pt 

To prove Lemma~\ref{lem:algo-2_terminates}, we need a few more observations.
We hereafter assume that $\G \in \FF_\PP(c)$ and $\TT = (\Ga = [\V_1, \V_2, \ldots, \V_k], T)$
is a relevant tree structure of $\G$.
By Lemma~\ref{lem:claim3}, for each $m \in [h]$, there exists a unique $i \in [k]$ such that 
$\V(\A_m) \subseteq \V_i$. 
Then $i_{\TT}(\A_m) \define i$ is called the \emph{$\TT$-index} of $\A_m$.
For brevity, we often use $i(m)$ to refer to $i_{\TT}(\A_m)$.   
The $\TT$-index of a node $\A_m$ is simply the index of the node in the tree structure $\TT$
that contains $\A_m$ as an induced subgraph. 
Hence we always have $\V(\A_m) \subseteq \V_{i(m)}$ for every $m \in [h]$. 

From now on, let $r_0 \in [h]$ be such that $i(r_0) = i_0$, which is the root of the tree $T$. 
Moreover, suppose $\A_{r_0}$ is assigned to be the root node in $\TT'$.
Recall that $\G_i$ denotes the $\V_i$-induced subgraph of $\G$ ($i \in [k]$). 

\vskip 10pt 
\begin{lemma}
\label{lem:claim4} 
If $\A_\ell$ is a child of $\A_m$ in $\TT'$ then either 
$i(\ell) = i(m)$ or $\G_{i(\ell)}$ is a child of $\G_{i(m)}$ in $\TT$. 
\end{lemma}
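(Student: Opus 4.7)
The plan is to split into two cases based on whether $i(\ell) = i(m)$, and in the nontrivial case exploit the key structural fact that, for each index $i \in [k]$, the nodes of $\TT'$ sharing $\TT$-index $i$ induce a connected subgraph (hence a subtree) of $T'$.

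First I would unpack the hypothesis. Since $\A_\ell$ is a child of $\A_m$ in $\TT'$, the edge $\{\A_\ell,\A_m\}$ belongs to $T'$, and by the construction of $T'$ there is exactly one edge $e\in\EG$ with one endpoint in $\V(\A_\ell)$ and the other in $\V(\A_m)$. If $i(\ell)=i(m)$, the conclusion holds trivially. Otherwise, using $\V(\A_\ell)\subseteq\V_{i(\ell)}$ and $\V(\A_m)\subseteq\V_{i(m)}$ together with property (R2) of $\TT$, the edge $e$ forces $\s_\G(\V_{i(\ell)},\V_{i(m)})=1$, so $\{i(\ell),i(m)\}\in\ET$. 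It then remains to orient this edge correctly in the tree $T$ rooted at $i_0$.

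Next I would establish the auxiliary claim: for each $i\in[k]$, the set $S_i = \{m' \in [h] : i(m')=i\}$ induces a connected subgraph of the tree $T'$. The key observation is that, because $\s_\G(\V_i,\V_{i'})\le 1$ for any $i\neq i'$, each edge of $T$ corresponds to a unique edge of $T'$. Hence if the unique $T'$-path between two members of $S_i$ left $S_i$ via a $T'$-edge projecting to some $T$-edge $\{i,i'\}$, it would need to return to $S_i$ through a second $T'$-edge projecting to the same $T$-edge, which is impossible. Consequently the unique $T'$-path between any two members of $S_i$ stays entirely within $S_i$.

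Finally I would finish by contradiction. Assume $i(m)$ is not the parent of $i(\ell)$ in $T$ rooted at $i_0$. Since $\{i(\ell),i(m)\}\in\ET$, the only alternative is that either $i(\ell)=i_0$, or $i(\ell)$ is the parent of $i(m)$ in $T$; in either case the unique $T$-path from $i(m)$ to $i_0$ passes through $i(\ell)$. Writing the $T'$-path from $\A_m$ to the root as $\A_m=\A_{m_0},\A_{m_1},\ldots,\A_{m_t}=\A_{r_0}$ and projecting via $i(\cdot)$ yields a walk in $T$ from $i(m)$ to $i_0$; such a walk in a tree must visit every vertex of the simple $i(m)$-to-$i_0$ path, so there is some $j\ge 1$ with $i(m_j)=i(\ell)$. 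Then $\A_\ell$ and $\A_{m_j}$ both lie in $S_{i(\ell)}$, yet the unique $T'$-path between them passes through $\A_m$, whose $\TT$-index differs from $i(\ell)$, contradicting the auxiliary claim. The main obstacle is the auxiliary claim; once that is in hand, the dichotomy in the lemma follows routinely from the tree structure of $T$.
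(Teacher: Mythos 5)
Your argument is correct but takes a genuinely different route from the paper's. The paper proves the lemma by strong induction on the depth of $\A_m$ in $\TT'$ rooted at $\A_{r_0}$: in the problematic Case~3 (where $\G_{i(m)}$ would be a child of $\G_{i(\ell)}$), it locates the closest ancestor $\A_p$ of $\A_m$ whose $\TT$-index differs from $i(m)$, invokes the inductive hypothesis to conclude $i(p)=i(\ell)$, and then produces two distinct $\G$-edges between $\V_{i(\ell)}$ and $\V_{i(m)}$, contradicting (R2). You instead extract a clean standalone structural fact --- that each level set $S_i$ of the $\TT$-index map induces a connected subtree of $T'$, because every $T$-edge pulls back to a unique $T'$-edge --- and finish with a projection-to-a-walk argument in $T$. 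This avoids the induction entirely and arguably isolates the real reason the lemma holds. Both routes ultimately hinge on the same uniqueness consequence of (R2) together with the choice of $\A_{r_0}$ (with $i(r_0)=i_0$) as the root of $\TT'$. One small point worth spelling out if you write this up fully: in the auxiliary claim, after the $T'$-path first leaves $S_i$ via an edge projecting to $e=\{i,i'\}$, you should note that removing $e$ separates $i$ from $i'$ in $T$, so the projected walk (which must return to $i$) crosses $e$ a second time, and since the $T'$-path is simple this gives a second, distinct $T'$-edge projecting to $e$ --- the contradiction you need.
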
 
\begin{proof}
We prove this claim by induction on the node $\A_m$. 

\nin{\bf Base case:} Let $m = r_0$ and let $\A_\ell$ be a child of the root node $\A_m$.
Since $\s_\G(\V(\A_\ell), \V(\A_m)) = 1$ and $\V(\A_m) \subseteq \V_{i(m)}=\V_{i_0}$, 
we conclude that either $\V(\A_\ell) \subseteq \V_{i_0}$ or $\V(\A_\ell) \subseteq \V_i$
for some child-node $\G_i$ of $\G_{i_0}$ (in $\TT$).
Therefore, either $i(\ell) = i_0 = i(m)$ or 
$\G_{i(\ell)}=\G_i$ is a child of $\G_{i(m)} = \G_{i_0}$.

\nin{\bf Inductive step:} Suppose the assertion of Lemma~\ref{lem:claim4} holds for all ancestors $\A_{m'}$
(and their corresponding children $\A_{\ell'}$) of $\A_m$. 
Take $\A_\ell$ to be a child of $\A_m$. We aim to show
that the assertion also holds for $\A_m$ and $\A_\ell$. 

As $\s_\G(\V(\A_\ell), \V(\A_m)) = 1$, there are three cases to consider, due to Lemma~\ref{lem:claim3}. 

\nin{\bf Case 1:} There exists some $i \in [k]$ such that $\V(\A_\ell) \subseteq \V_i$ and 
	    $\V(\A_m) \subseteq \V_i$. Then $i(\ell) = i(m) = i$. 

\nin{\bf Case 2:} There exist $i\in [k]$ and $j \in [k]$ such that $\V(\A_\ell) \subseteq \V_j$, 
       $\V(\A_m) \subseteq \V_i$, and $\G_j$ is a child of $\G_i$. In this case, since 
			$i(\ell) = j$ and $i(m) = i$, we deduce that $\G_{i(\ell)}$ is a child of $\G_{i(m)}$. 

\nin{\bf Case 3:} There exist $i\in [k]$ and $j \in [k]$ such that $\V(\A_\ell) \subseteq \V_i$, 
	    $\V(\A_m) \subseteq \V_j$, and $\G_j$ is a child of $\G_i$. We are to derive a contradiction
			in this case. 
			
			Since $\G_{i(m)}=\G_j$ is a child of $\G_{i(\ell)} = \G_i$, $i(m) \neq i_0$. 
			Thus $\A_m$ has at least one ancestor, namely $\A_{r_0}$ ($i(r_0) = i_0$), with a different $\TT$-index.  
			Let $\A_p$ be the closest ancestor of $\A_m$ that satisfies $i(p) \neq i(m) = j$. 
			Then the child $\A_q$ of $\A_p$ that lies on the path from $\A_p$ down to 
			$\A_m$ must have $\TT$-index $i(q)= i(m) = j$. 
			By the inductive hypothesis, $\G_j = \G_{i(m)} =\G_{i(q)}$ is a child of $\G_{i(p)}$ in $\TT$. 
		 			Therefore, $i(p) \equiv i$. 
			\begin{figure}[H]
			\centering
			\includegraphics{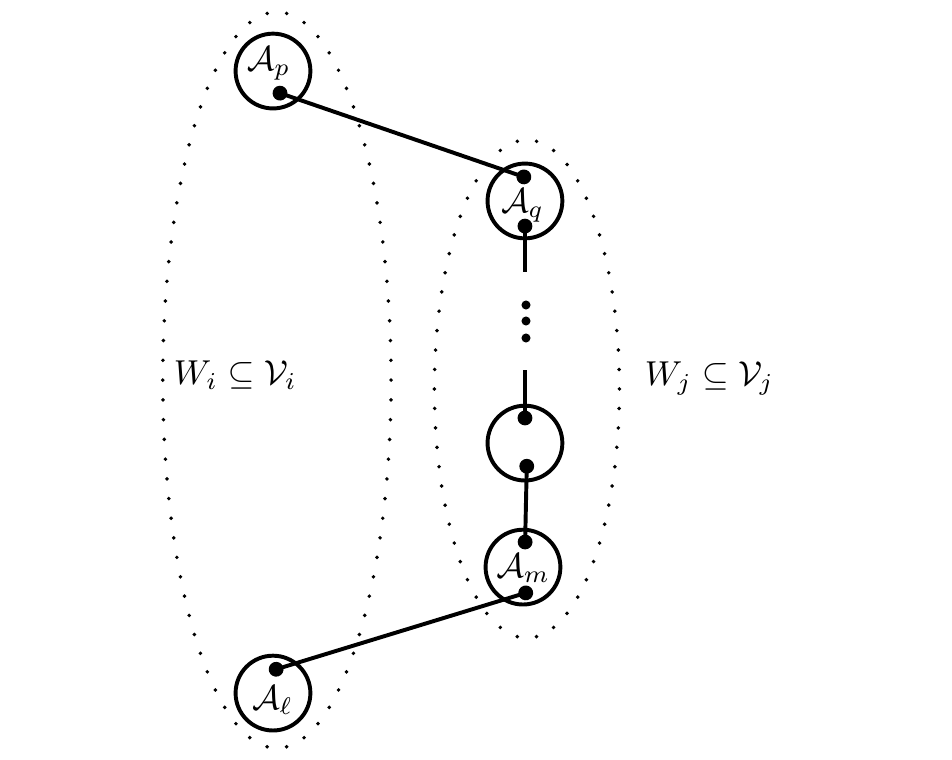}
\caption{Case 3}
\label{fig:case3}
\end{figure}
Let 
			\[
			S = \{s:\ \A_s \text{ is a descendant of } \A_p \text{ and an ancestor of } \A_m\}. 
			\]
			By the definition of $\A_p$, we have $i(s) = j$ for all $s \in S$. Therefore
			\[
			W_j \define \V(A_m) \cup \big( \cup_{s \in S} \V(\A_s) \big) \subseteq \V_j. 
			\]
			Moreover, since $i(p) = i = i(\ell)$, 
			\[
			W_i \define \V(\A_p) \cup \V(\A_\ell) \subseteq \V_i. 
			\]
			Hence 
			\[
			1 = \s_\G(\V_i, \V_j) \geq \s_\G(W_i, W_j) \geq 2, 
			\]
			which is impossible. 
			The last inequality is explained as follows. The two different edges that connect
			$\A_p$ and $\A_q$, $\A_\ell$ and $\A_m$ both have one end in $W_i$
			and the other end in $W_j$ (Figure~\ref{fig:case3}). 
\end{proof} 
\vskip 10pt 

For each $\ell \in [h]$ let $\B_\ell$ be the collection of nodes of $\TT'$ that 
consists of $\A_\ell$ and all of its descendant nodes in $\TT'$. If $\A_\ell$ is a child of $\A_m$, 
	we refer to $\B_\ell$ as a \emph{branch} of $\A_m$ in $\TT'$. 
A branch of $\A_m$ is called \emph{nonessential} if all of its nodes have the same $\TT$-index as $\A_m$. 
Otherwise it is called	\emph{essential}.
A DC of $\A_m$ that connects it to at least one of its essential branches is called an \emph{essential} DC.
Otherwise it is called a \emph{nonessential} DC. 
 
\vskip 10pt 
\begin{lemma}
\label{lem:claim5}
For each $m \in [h]$, the number of essential DCs of 
$\A_m$ is at most $c$. 
\end{lemma}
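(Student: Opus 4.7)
The plan is to construct an injection from the set of essential DCs of $\A_m$ in $\TT'$ into the set of DCs of $\G_{i(m)}$ in $\TT$. Since $\TT$ is a relevant tree structure of $\G$, the target set has cardinality at most $\mdct \leq c$, so an injection will immediately yield the lemma.

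To set up the injection, I will use the ``boundary edges'' in $\TT'$ arising from the tree edges of $\TT$ incident to $\G_{i(m)}$ and its children in $\TT$. For each child $j$ of $\G_{i(m)}$ in $\TT$, the definition of a ($\PP$) simple tree structure yields a unique edge $e_j = \{u_j, v_j\}$ of $\G$ with $u_j \in \V_{i(m)}$ and $v_j \in \V_j$, and the DCs of $\G_{i(m)}$ in $\TT$ are exactly the distinct values of $u_j$ as $j$ ranges over these children. Because $u_j$ and $v_j$ lie in different parts of the partition $[\V(\A_1), \ldots, \V(\A_h)]$, the edge $e_j$ must be a tree edge of $\TT'$, joining some node $\A_{a(j)} \ni u_j$ to some node $\A_{b(j)} \ni v_j$; rooting $\TT'$ at $\A_{r_0}$ with $i(r_0) = i_0$ and invoking Lemma~\ref{lem:claim4}, indices descend monotonically in $T$ along downward paths in $\TT'$, so $\A_{a(j)}$ is the parent of $\A_{b(j)}$. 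Now for any essential DC $u$ of $\A_m$, the essential branch $\B_\ell$ through $u$ contains at least one node of $\TT$-index different from $i(m)$; walking down from $\A_\ell$ to any such node, Lemma~\ref{lem:claim4} together with the uniqueness of $e_j$ forces the first change of index along this walk to occur exactly across some boundary edge $e_j$. I then set $\phi(u) := u_j$ for one such $j$.

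The main step is to verify that $\phi$ is injective. Suppose $\phi(u_1) = \phi(u_2) = u^*$, with associated children $j_1, j_2$ of $\G_{i(m)}$ in $\TT$ satisfying $u_{j_1} = u_{j_2} = u^*$. Since $u^* \in \V(\A_{a(j_1)}) \cap \V(\A_{a(j_2)})$ and the sets $\V(\A_p)$ partition $\VG$, we must have $\A_{a(j_1)} = \A_{a(j_2)} =: \A_a$. Because $\TT'$ is a tree, the downward path from $\A_m$ to $\A_a$ is unique; let $\A_c$ denote its first step, which is a child of $\A_m$ in $\TT'$. Both boundary edges $e_{j_1}$ and $e_{j_2}$ then lie in the branch rooted at $\A_c$, so by construction of $\phi$ both essential DCs $u_1$ and $u_2$ must equal the endpoint in $\V(\A_m)$ of the single edge $\A_m \to \A_c$, giving $u_1 = u_2$. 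The chief subtlety I expect is that an essential branch can contain several boundary edges $e_j$, so $\phi$ is only defined up to a choice of $j$; the injectivity argument is unaffected by this ambiguity because it relies only on the partition property of the $\V(\A_p)$'s and on $\TT'$ being a tree, not on any particular choice of $j$.
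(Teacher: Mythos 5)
Your proof is correct and takes essentially the same approach as the paper: both construct an injection from the essential DCs of $\A_m$ in $\TT'$ into the DCs of $\G_{i(m)}$ in $\TT$, using Lemma~\ref{lem:claim4} to locate, along each essential branch, the first edge across which the $\TT$-index changes (your boundary edge $e_j$ is exactly the paper's pair $(\A_{p_u},\A_{\ell_u})$ with connector $u'$). The only presentational differences are that the paper argues by contradiction and proves injectivity by a short case split on whether $u',v'$ lie in $\V(\A_m)$, whereas you argue injectivity directly from the partition property and the uniqueness of the downward $\TT'$-path from $\A_m$ to $\A_a$; your phrasing should explicitly cover the degenerate case $\A_a=\A_m$ (where the ``first step'' $\A_c$ does not exist), but there the conclusion $u_1=u^*=u_2$ is immediate.
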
 
\begin{proof}
Suppose by contradiction that some node $\A_m$ has more than $c$ essential DCs. 
Our goal is to show that in $\TT$, the number of DCs of $\G_{i(m)}$ would be larger than $c$, 
which is impossible.
\begin{figure}[H]
\centering
\includegraphics{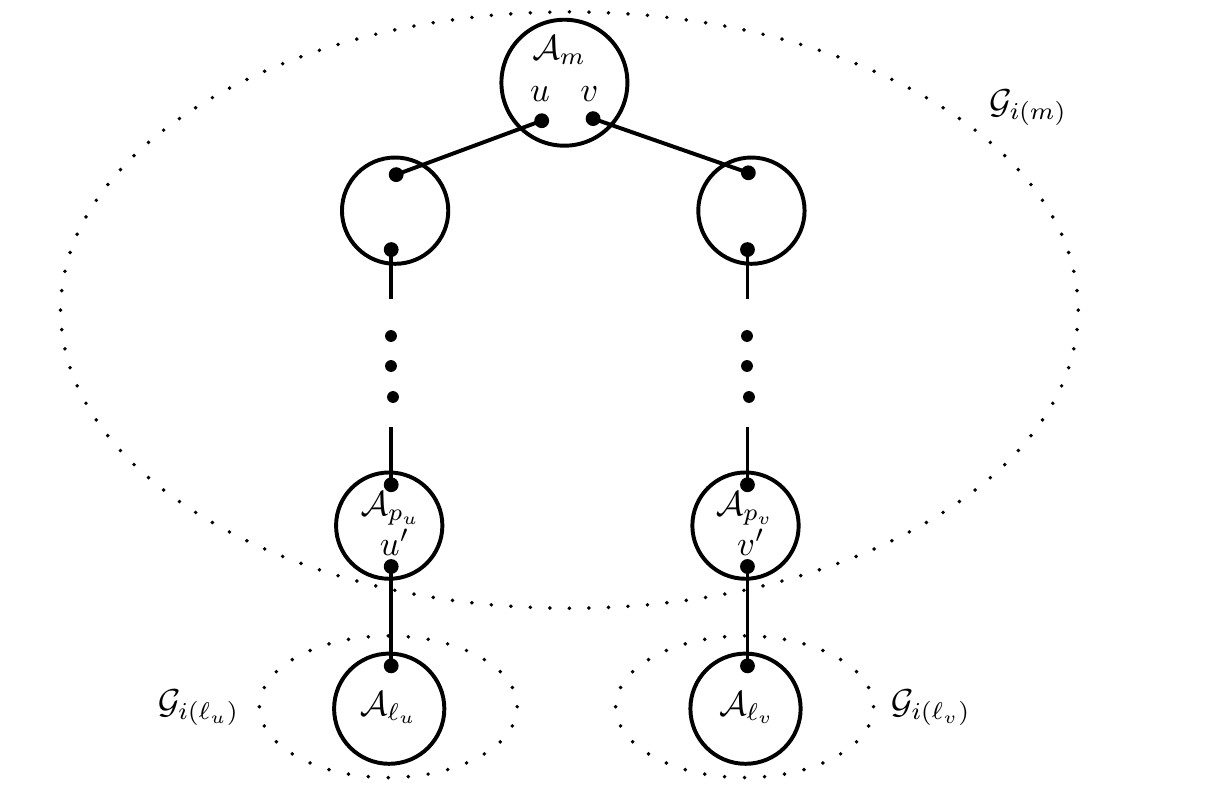}
\caption{DCs of $\A_m$ in $\TT'$ and corresponding DCs of $\G_{i(m)}$ in $\TT$}
\label{fig:claim5-fig}
\end{figure}
For each essential DC $u$ of $\A_m$, let $\A_{\ell_u}$ be the closest
descendant of $\A_m$ (connected to $\A_m$ via $u$) whose 
$\TT$-index is different from that of $\A_m$. In other words, 
$i(\ell_u) \neq i(m)$. Let $\A_{p_u}$ be the parent node of $\A_{\ell_u}$. 
Then clearly $i(p_u) = i(m)$. By Lemma~\ref{lem:claim4}, $\G_{i(\ell_u)}$ is a child of $\G_{i(m)} = \G_{i(p_u)}$ in $\TT$. 
Let $u'$ be the DC that connects $\A_{p_u}$ and $\A_{\ell_u}$ in $\TT'$. 
Note that $u'$ and $u$ are identical when $p_u \equiv m$. 
Since $\V(\A_{p_u}) \subseteq \V_{i(p_u)} = \V_{i(m)}$ and $\V(\A_{\ell_u}) \subseteq \V_{i(\ell_u)}$, 
$u'$ is also the DC that connects $\G_{i(m)}$ and its child $\G_{i(\ell_u)}$ in $\TT$.

We use similar notations for another essential DC $v \neq u$ of $\A_m$. 
Then another child of $\G_{i(m)}$, namely $\G_{i(\ell_v)}$, is connected to 
$\G_{i(m)}$ via the DC $v'$ of $\G_{i(m)}$ (Figure~\ref{fig:claim5-fig}). 

If either $u'$ or $v'$ does not belong to $\A_m$, then as $\TT'$ is a ($\PP$) simple tree structure of $\G$, 
it is straightforward that $u' \neq v'$. 
If both of the DCs are in $\A_m$ then $u' \equiv u$ and $v' \equiv v$, which in turn implies that $u' \neq v'$. 
Hence, distinct essential DCs of $\A_m$ in $\TT'$ correspond to distinct DCs of $\G_{i(m)}$ in $\TT$. 
Therefore, $\G_{i(m)}$ would have more than $c$ DCs in $\TT$.     
\end{proof} 

\vskip 10pt 
\begin{lemma} 
\label{lem:claim6}
In the Merging Phase the algorithm merges each nonessential branch of $\TT'$ 
into a leaf. 
\end{lemma}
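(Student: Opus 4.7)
The plan is to proceed by induction on $|\B_\ell|$, the number of nodes in the nonessential branch, and to show that by the time the Merging Phase has processed the root $\A_\ell$ of $\B_\ell$, the entire branch has been collapsed into a single leaf of the current tree.

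The structural fact that powers the argument is this: every node $\A_s \in \B_\ell$ shares the $\TT$-index $i(\ell)$, so $\V(\A_s) \subseteq \V_{i(\ell)}$. Hence any union of the $\V(\A_s)$'s lies inside $\V_{i(\ell)}$, and the subgraph of $\G$ it induces is an induced subgraph of $\G_{i(\ell)} \in \PP$, which by property (P1) must itself lie in $\PP$. In other words, the ``belongs to $\PP$'' condition required by the merging step is satisfied automatically for any candidate merging performed inside $\B_\ell$, which is the key observation that unlocks the induction.

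The base case $|\B_\ell|=1$ is immediate, since $\A_\ell$ is already a leaf. For the inductive step, I fix $\A_\ell$ and let $\A_{\ell_1}, \ldots, \A_{\ell_d}$ be its children in $\TT'$. Each $\B_{\ell_j}$ is a strictly smaller nonessential branch of $\A_\ell$ (because $\B_{\ell_j} \subseteq \B_\ell$ inherits the $\TT$-index $i(\ell)$), so the inductive hypothesis applies. Because $\cL_r$ orders every node after all of its descendants, by the time $\A_\ell$ is visited each sub-branch $\B_{\ell_j}$ has already been collapsed into a single leaf child of $\A_\ell$ in the current tree $\TT'_r$.

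At this point, every current child of $\A_\ell$ is a leaf whose vertex set is contained in $\V_{i(\ell)}$. Taking $E_\ell$ to be the full set $D_\ell$ of $\A_\ell$'s current DCs satisfies both merging conditions --- the resulting $C_\ell$ consists only of leaves, and the induced subgraph on $\V(\A_\ell) \cup \bigl(\cup_{\A_s \in C_\ell} \V(\A_s)\bigr)$ lies in $\PP$ by the structural fact above --- together with the trivial size bound $|E_\ell|=|D_\ell|\geq|D_\ell|-c$. Consequently the maximum subset chosen by the algorithm at $\A_\ell$ is $D_\ell$ itself, every current child of $\A_\ell$ gets absorbed, and $\A_\ell$ becomes a leaf. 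The only delicate point is lining up the inductive hypothesis with the traversal order of $\cL_r$, and that is immediate from its definition, so I do not anticipate a genuine obstacle.
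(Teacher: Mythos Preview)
Your proof is correct and follows essentially the same approach as the paper: both arguments rest on the observation that all nodes of a nonessential branch share the $\TT$-index $i(\ell)$, so any union of their vertex sets induces a subgraph of $\G_{i(\ell)}\in\PP$, whence the algorithm can (and, by maximality of $E_m$, will) absorb all leaf children at every node of the branch as it traverses $\cL_r$ bottom-up. Your version packages this as an explicit induction on $|\B_\ell|$ while the paper states it more informally, but the content is the same.
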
 
\begin{proof}
Suppose $\B_\ell$ is a nonessential branch of $\TT'$. 
All nodes in $\B_\ell$ have the same $\TT$-index $i$ for some $i \in [k]$. 
Hence for every node $\A_p \in \B_\ell$, $\V(\A_p) \subseteq \V_i$. 
Therefore any arbitrary set of nodes in $\B_\ell$
can be merged into an induced subgraph of $\G_i$, which also belongs to $\PP$
since $\G_i \in \PP$. 
Recall that in the Merging Phase, the algorithm tries to merge a node with
a set of leaf child-nodes connected to it via a maximum set of DCs. 
Hence a node in $\B_\ell$ whose children are all leaves is always merged with \emph{all} of its children
and turned into a leaf thereafter. 
As a result, in the Merging Phase, the algorithm traverses the branch in a bottom up manner, 
and keeps merging the leaf nodes with their parents to turn the parents into leaves.
Finally, when the algorithm reaches the top node of the branch, the whole branch
is merged into a leaf. 
\end{proof} 
\vskip 10pt 

Now we are in position to prove Lemma~\ref{lem:algo-2_terminates}. 

\vskip 10pt 
\begin{proof}[Proof of Lemma~\ref{lem:algo-2_terminates}]
As $\G \in \FF_\PP(c)$, due to Lemma~\ref{lem:phase1_terminates}, 
$\G$ passes the Splitting Phase successfully. 
It remains to show that $\G$ also passes the Merging Phase successfully. 
In fact, we show that the algorithm finds a relevant tree structure of $\G$
as soon as $\A_{r_0}$ ($i(r_0) = i_0$) is assigned to be the root node of $\TT'$. 

As shown in Lemma~\ref{lem:claim6}, when the algorithm visits a node $\A_m$, 
every nonessential branch of $\A_m$ has already been merged into a leaf node. 
The other branches of $\A_m$ are essential. By Lemma~\ref{lem:claim5}, there are
at most $c$ DCs of $\A_m$ that connect $\A_m$ to those essential branches.
A set $E_m$ that satisfies the requirements mentioned in the
Merging Phase always exists. Indeed, let $E_m$ be the set of all
nonessential DCs of $\A_m$ then
\begin{itemize}
	\item As there are at most $c$ essential DCs, $|E_m| \geq |D_m| - c$;
	\item As every branch connected to $\A_m$ via DCs in $E_m$ is nonessential, 
	        it is already merged into a leaf; Hence $C_m$ contains only leaf nodes; 
	\item Since all the branches	connected to $\A_m$ via DCs in $E_m$ are nonessential, 
	        a similar argument as in the proof of Lemma~\ref{lem:claim6} shows that the leaf child-nodes  
					of $\A_m$ in $C_m$ can be merged with $\A_m$ to produce a graph that belongs to $\PP$. 
\end{itemize}
After being merged, $\A_m$ has at most $c$ DCs. 
When the algorithm reaches the root node, $\TT'$ is turned into a relevant tree structure of $\G$. 
Thus, when $\A_{r_0}$ is chosen as the root of $\TT'$, the algorithm runs smoothly in the Merging Phase
and finds a relevant tree structure of $\G$. 
\end{proof} 

\vskip 10pt 
\begin{lemma}
\label{lem:polynomial_time_ALGO-2}
The running time of Algorithm 2 is polynomial with respect to the order of $\G$. 
\end{lemma}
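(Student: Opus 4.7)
The plan is to bound the running time of the two phases separately, invoking standard bridge-detection and the polynomial-time recognition oracle guaranteed by property (P2) of~$\PP$. Let $n=|\VG|$ throughout.

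For the Splitting Phase, observe that at every moment the vertex sets of the components in $\Q_1 \cup \Q_2$ form a partition of $\VG$, so at most $n$ distinct components are ever produced and thus the outer \textbf{while} loop executes $O(n)$ times. A single iteration pops some $\A$, searches for a bridge (equivalently a partition $(U,V)$ with $\s_\A(U,V)=1$), and either splits $\A$ into the two induced subgraphs on $U$ and $V$, or tests whether $\A\in\PP$. Bridge detection (e.g.\ Tarjan's algorithm) costs $O(|\V(\A)|+|\E(\A)|)=O(n^2)$, inducing subgraphs costs $O(n^2)$, and the $\PP$-membership test is polynomial by~(P2). Hence the Splitting Phase runs in $\mathrm{poly}(n)$ time, and at its end one produces the graph $T'$ on $h\le n$ vertices in $O(n^2)$ additional time.

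For the Merging Phase, the outer \textbf{for}-loop iterates at most $h\le n$ times, each time cloning $\TT'$ (cost $O(n^2)$), computing a bottom-up order $\cL_r$ via a DFS on $T'$ (cost $O(n)$), and processing at most $h\le n$ nodes $\A_m$. The only non-obvious cost is the search for a maximum subset $E_m\subseteq D_m$ with $|E_m|\ge |D_m|-c$ satisfying the two conditions; this is the main point to watch, because a naive enumeration of all subsets of $D_m$ is exponential. However, the constraint $|E_m|\ge |D_m|-c$ means that $E_m$ is determined by the complementary set $D_m\setminus E_m$ of size at most $c$, so there are at most $\sum_{j=0}^{c}\binom{|D_m|}{j}=O(n^{c})$ candidates, which is polynomial since $c$ is constant. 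For each candidate, checking that every child in $C_m$ is a leaf takes $O(n)$ time, and checking that the induced subgraph on $\V(\A_m)\cup\bigcup_{\A_\ell\in C_m}\V(\A_\ell)$ lies in $\PP$ is polynomial by~(P2). Performing the actual merge is $O(n^2)$.

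Multiplying the three nested loop bounds yields a total running time of $O(n)\cdot O(n)\cdot O(n^{c})\cdot \mathrm{poly}(n)=\mathrm{poly}(n)$ for the Merging Phase, and combining with the Splitting Phase shows that Algorithm~2 runs in time polynomial in~$n$, as claimed.
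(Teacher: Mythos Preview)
Your proof is correct and follows essentially the same approach as the paper: bounding the Splitting Phase by the observation that at most $n$ components are ever produced (with Tarjan for bridges and (P2) for membership), and handling the Merging Phase by the key observation that the constraint $|E_m|\ge |D_m|-c$ lets one enumerate the $O(n^c)$ complements of size at most $c$ rather than all subsets of $D_m$. The paper's proof is organized slightly differently (it separately bounds the while- and for-loops in the Splitting Phase by $n$ each), but the substance is the same.
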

\begin{proof}
Every single task in the Splitting Phase can be accomplished in polynomial time. 
Those tasks include: finding a bridge in a connected graph (see Tarjan~\cite{Tarjan74}), deciding whether a graph belongs to 
$\PP$, 
and building a tree based on the components of $\G$. 

Let examine the ``while'' loop and the ``for'' loop in the Splitting Phase. 
After each intermediate iteration in the while loop, 
as at least one component gets split into two smaller components, 
the number of components of $\G$ is increased by at least one. 
Since the vertex sets of the components are
pairwise disjoint, there are no more than $n=|\VG|$ components at any time. 
Hence, there are no more than $n$ iterations in the while loop. 
Since the number of graphs in $\Q_1$ cannot exceed $n$, 
the number of iterations in the for loop is also at most $n$. 
Therefore, the Splitting Phase finishes in polynomial time
with respect to $n$.

We now look at the running time of the Merging Phase.  
Each ``for'' loop has at most $n$ iterations and therefore does not
raise any complexity issue.  
The only task that needs an explanation is the task of finding
a maximum subset $E_m$ of DCs of $\A_m$ that satisfies
certain requirements. This task can be done by examining all
$s$-subsets of $D_m$ with $s$ runs from $|D_m|$ down to
$|D_m| - c$. There are 
\[
\sum_{s=|D_m|-c}^{|D_m|} \binom{|D_m|}{s} = \sum_{i = 0}^{c} \binom{|D_m|}{i} \leq \sum_{i=0}^c \binom{n}{i} = O(n^c)
\]
such subsets. For each subset, the verification of the two conditions specified in the algorithm can also be done in polynomial time. Therefore, the Merging Phase's running time
is polynomial with respect to $n$.  
\end{proof} 

\vskip 10pt
\begin{proof}[Proof of Theorem~\ref{thm:recognition1}]
Lemma~\ref{lem:membership_proof}, Lemma~\ref{lem:algo-2_terminates}, 
and Lemma~\ref{lem:polynomial_time_ALGO-2} qualify Algorithm 2 as
a polynomial time algorithm to recognize a member of $\FF_\PP(c)$. 
Thus Theorem~\ref{thm:recognition1} follows. 
\end{proof}
\vskip 10pt

Algorithm~2 can be adjusted, by replacing $c$ by $c\log |\VG| $, to recognize
a graph $\G$ in $\FF_\PP(c\log(\cdot))$, for any constant $c > 0$. However, according to
the proof of Lemma~\ref{lem:polynomial_time_ALGO-2}, the running time
of the algorithm in this case is roughly $O(n^{c\log n})$ ($n = |\VG|$), which is no longer polynomial in $n$.   

\section{Min-Ranks of Graphs of Small Orders}
\label{sec:small_graphs}

To aid further research on the behavior of min-ranks of graphs, 
we have carried out a computation of binary min-ranks of \emph{all}
non-isomorphic graphs of orders up to $10$. 
\begin{figure}[H]
	\centering
		\begin{tabular}{| l | l | l |}		
			\hline
		Order & Number of & Total running time\\
		& non-isomorphic graphs & \\
		 \hline
		\hline
			$1$ & $1$ & $<1$ seconds\\
			$2$ & $2$ & $<1$ seconds\\
			$3$ & $4$  & $<1$ seconds\\
			$4$ & $11$ & $<1$ seconds\\
			$5$ & $34$ & $<1$ seconds\\
			$6$ & $156$ & $<1$ seconds\\
			$7$ & $1,044$ & $<1$ seconds\\
			$8$ & $12,346$ & $25$ seconds\\
			$9$ & $274,668$ & $56$ minutes \\
			$10$ & $12,005,168$ & $4.3$ days\\
			\hline
		\end{tabular}
	\caption{Running time for finding min-ranks of graphs or small orders}
	\label{fig:NumberOfNonIsomorphicGraphsOrSmallOrders}
\end{figure}

A reduction to SAT (Satisfiability) problem \cite{ChaudhrySprintson}
provides us with an elegant method to compute the binary 
min-rank of a graph.  
We observed that while the SAT-based approach is very efficient for 
graphs having many edges, it does not perform well for simple instances, such as a graph
on $10$ vertices with no edges (min-rank $10$). 
For such naive instances, the SAT-solver that we used, Minisat \cite{EenSorensson2003}, 
was not able to terminate after hours of computation. 
This might be attributed to the fact that the SAT instances corresponding to a graph
with fewer edges contain more variables than those corresponding to 
a graphs with more edges on the same set of vertices. 

\begin{figure}[h]
\includegraphics[scale=0.15]{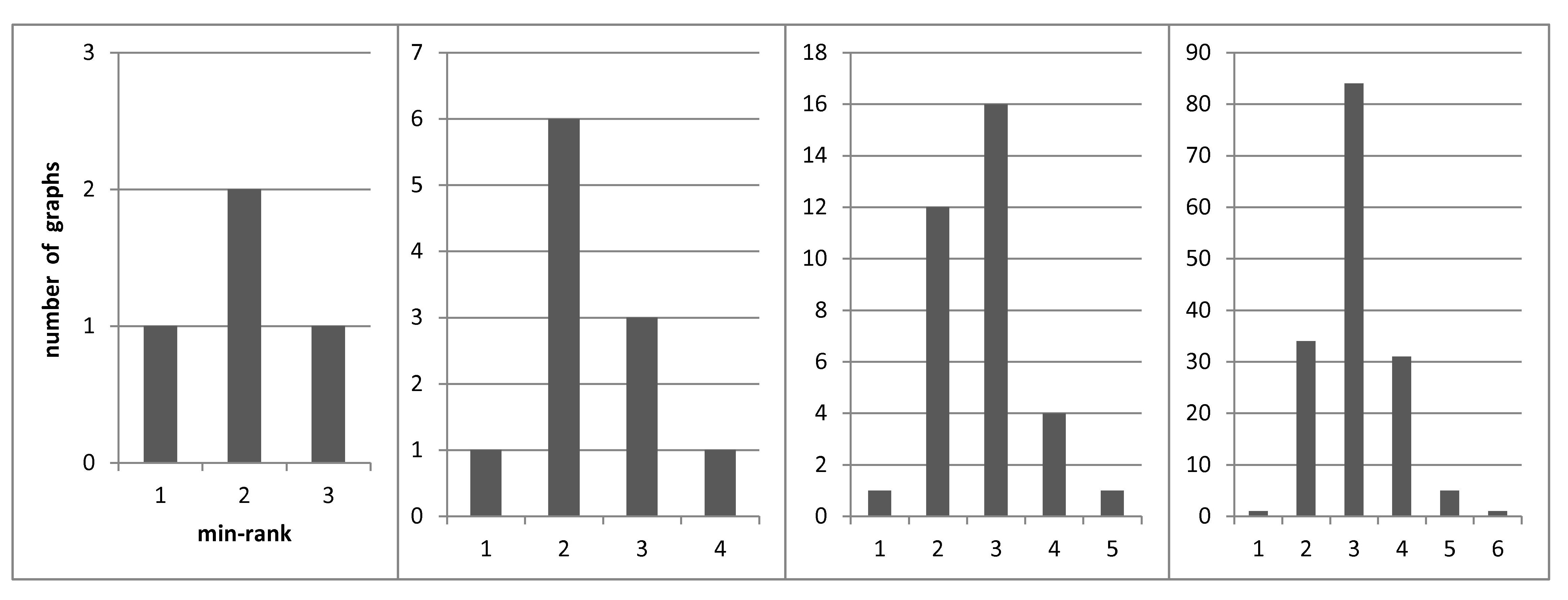}
\caption{Min-rank distributions for graphs of orders $3$--$6$}
\label{fig:distribution_3_6}
\end{figure}
 
\begin{figure}[h]
\includegraphics[scale=0.14]{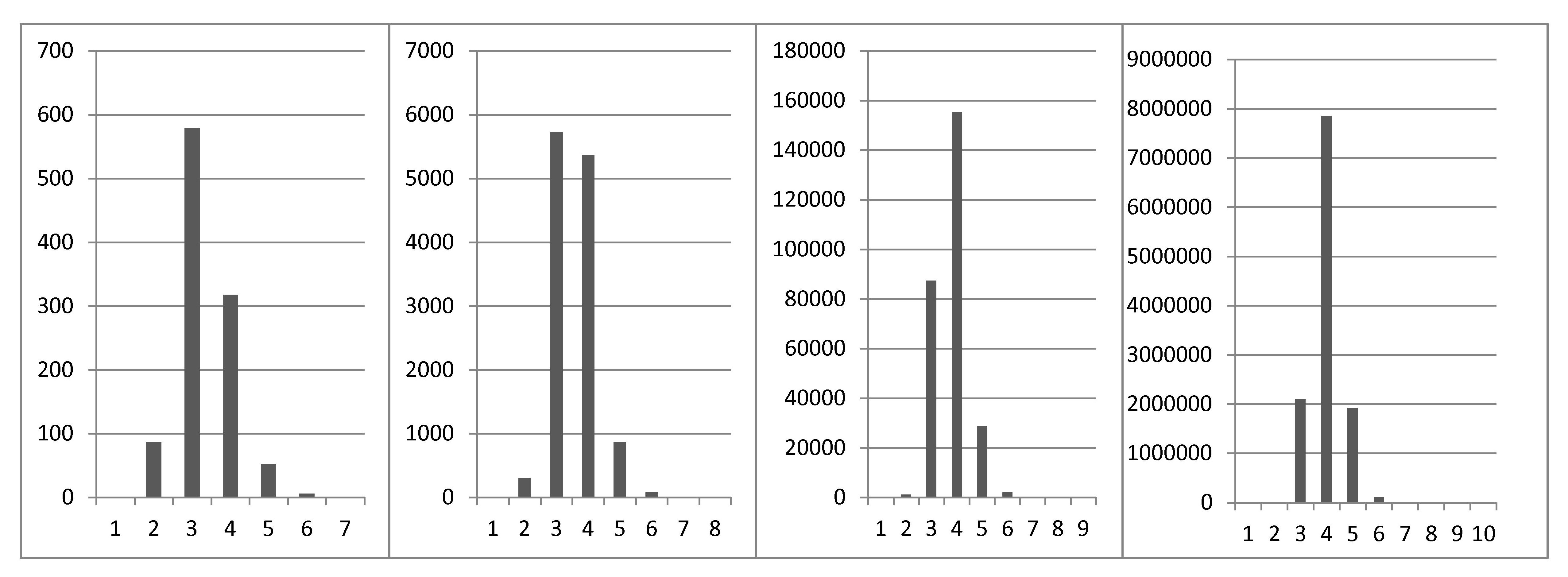}
\caption{Min-rank distributions for graphs of orders $7$--$10$}
\label{fig:distribution_7_10}
\end{figure}

To achieve our goal, we wrote a sub-program which used a Branch-and-Bound
algorithm to find min-ranks in an exhaustive manner.
When the input graph was of large size, that is, its size surpasses a given threshold, a sub-program using a SAT-solver was invoked; Otherwise, the Branch-and-Bound sub-program was used. 
We noticed that there are graphs of order $10$ that have around $21$--$22$ edges, for which
the Branch-and-Bound sub-program could find the min-ranks in less than one second, while the SAT-based sub-program could not finish computations after $3$-$4$ hours.  
For graphs of order $10$, we observed that the threshold $24$, which we actually used, did work well.
The most time-consuming task is to compute the min-ranks of all $12,005,618$
non-isomorphic graphs of order $10$. This task took more than four days
to finish. 

The charts in Figure~\ref{fig:distribution_3_6} and Figure~\ref{fig:distribution_7_10} present the distributions of min-ranks of 
non-isomorphic graphs of orders from three to ten. 
In each chart, the x-axis shows the minranks, and the y-axis shows the number of non-isomorphic
graphs that have a certain minrank.   
The minranks and the corresponding matrices that achieve the minranks of all non-isomorphic graphs of orders up to $10$ are available at \cite{small-graphs}. 
Interested reader may also visit \cite{mr} to calculate the min-rank of a graph.  

\section{Open Problems}
\label{sec:open_problems}
For future research, we would like to tackle the following open problems. \\
 
\nin {\bf Open Problem I:} Currently, in order for Algorithm~1 to work, 
we restrict ourselves to $\FF_\PP(c\log(\cdot))$, the family of graphs $\G$
having a ($\PP$) simple tree structure $\TT$ with $\mdc(\TT) \leq c\log |\VG|$ for some constant $c$. 
An intriguing question is: can we go beyond $\FF_\PP(c\log(\cdot))$?\\

\nin {\bf Open Problem II:} Find an algorithm that recognizes a member of $\FF_\PP(c \log (\cdot))$
in polynomial time, or show that there does not exist such an algorithm. \\

\nin {\bf Open Problem III:} Computation of min-ranks of graphs with $k$-multiplicity tree structures
is open for every $k \geq 2$. The $2$-multiplicity tree structure is the simplest next case to consider. 
In such a tree structure, a node can be connected to another node by at most \emph{two} edges. 
The idea of using a dynamic programming algorithm to compute min-ranks is almost the same. 
However, there are two main issues for us to tackle. 
Firstly, we need to study the effect on min-rank when an edge is removed from the graph.
In other words, we must know the relation between $\mrq(\G)$ and $\mrq(\G-e)$ for an edge $e$
of $\G$. This relation was investigated for outerplanar graphs by Berliner and Langberg~\cite[Claim~4.2, Claim~4.3]{BerlinerLangberg2011}. We need to extend their result to a new scenario. 
Secondly, as now the two nodes in the tree structure can be connected by two edges, 
a recognition algorithm for graphs with $2$-multiplicity tree structures could be more complicated
than that for graphs with simple tree structures. \\

\nin {\bf Open Problem IV:} Extending the current results to directed graphs. 

\section{Acknowledgments}
We thank Vitaly Skachek for useful comments on the draft of the paper.  
We also thank Michael Langberg for providing the preprints 
\cite{HavivLangberg2011}, \cite{BerlinerLangberg2011}.
 
\bibliographystyle{spmpsci}      
\bibliography{Polynomial_Time_Algorithm_Graphs_Tree_Structure}   

\end{document}